\tikzset{
every node/.style={draw, circle, inner sep=2pt}
}
\newtheorem{theorem}{Theorem}[section]
\newtheorem{proposition}[theorem]{Proposition}
\newtheorem{corollary}[theorem]{Corollary}
\newtheorem{lemma}[theorem]{Lemma}
\theoremstyle{definition}
\newtheorem{example}[theorem]{Example}
\newtheorem{remark}[theorem]{Remark}
\newtheorem{definition}[theorem]{Definition}
\newcommand{\hide}[1]{}
\newcommand{\npmatrix}[1]{\left( \begin{matrix} #1 \end{matrix} \right)}
\newcommand{\R}{\mathbb{R}}
\newcommand{\mr}{\mathrm{mr}}
\newcommand{\tr}{\operatorname{tr}}
\newcommand{\mc}[1]{\mathcal{#1}}
\newcommand{\trans}{^\top}
\newcommand{\dunion}{\oplus}
\newcommand{\calS}{\mathcal{S}}
\newcommand{\spec}{\operatorname{spec}}
\newcommand{\mptncl}{\mathcal{S}^{\rm cl}}
\newcommand{\SnR}{S_n(\mathbb{R})}
\newcommand{\mei}{\mathcal{E}}
\newcommand{\rank}{\operatorname{rank}}
\newcommand{\bzero}{{\bf 0}}
\newcommand{\bone}{{\bf 1}}
\newcommand{\bb}{{\bf b}}
\newcommand{\bu}{{\bf u}}
\newcommand{\by}{{\bf y}}
\newcommand{\bx}{{\bf x}}
    \definecolor{helena}{rgb}{.2,.8,.4}
    \definecolor{polona}{rgb}{.4,.2,.8}
    \definecolor{jephian}{rgb}{.3,.4,.1}
   \definecolor{todo}{rgb}{.8,.2,.2}
\title{On the inverse eigenvalue problem for block graphs}
\author{Jephian C.-H.~Lin
\thanks{Department of Applied Mathematics, National Sun Yat-sen University, Kaohsiung 80424, Taiwan (jephianlin@gmail.com)}
\and 
Polona Oblak
\thanks{Faculty of Computer and Information Science, University of Ljubljana, Ve\v cna pot 113, SI-1000 Ljubljana, Slovenia (polona.oblak@fri.uni-lj.si)}
\and 
Helena \v{S}migoc
\thanks{School of Mathematics and Statistics, University College Dublin, Belfield, Dublin 4, Ireland (helena.smigoc@ucd.ie)}
}
\begin{document}

\setcounter{footnote}{1}
\maketitle 

\begin{abstract}
The inverse eigenvalue problem of a graph $G$ aims to find all possible spectra for matrices whose $(i,j)$-entry, for $i\neq j$, is nonzero precisely when $i$ is adjacent to $j$.  In this work, the inverse eigenvalue problem is completely solved for a subfamily of clique-path graphs, in particular for lollipop graphs and generalized barbell graphs. For a matrix $A$ with associated graph $G$, a new technique utilizing the strong spectral property is introduced, allowing us to construct a matrix $A'$ whose graph is obtained from $G$ by appending a clique while arbitrary list of eigenvalues is added to the spectrum.  Consequently, many spectra are shown realizable for block graphs.  
\end{abstract}


\noindent{\bf Keywords:} 
Symmetric matrix; Inverse Eigenvalue Problem; Strong spectral property; Graph; Block graph
\medskip

\noindent{\bf AMS subject classifications:}
05C50, 
15A18, 
15B57, 
65F18. 

\section{Introduction}

Let $G=(V, E)$ be a simple graph on $n$ vertices. Define $\calS(G)$ as the set of all $n \times n$ real symmetric  matrices $A=(a_{ij})$ such that for $i \ne j$, $a_{ij}\ne 0$ if and only if $\{i,j\} \in E$.   Note that there is no restriction on the diagonal entries of matrix $A$.  The inverse eigenvalue problem of a graph $G$ (IEP-$G$) asks what possible spectra occur among matrices in $\calS(G)$.  The IEP-$G$ is motivated from the theory of vibrations \cite{MR2102477,MR1908601} and can be viewed as a discrete version of the question:  What kind of vibration behaviors (spectra) are allowed on a given structure (graph)?  On the other hand, the IEP-$G$ is a fundamental question in matrix theory and studies the possible spectra of a matrix with a given zero-nonzero pattern.

The IEP-$G$ is solved for only a handful of families of graphs. In particular, the IEP-$G$ for paths has been solved and many additional properties of tridiagonal matrices were studied \cite{MR382314, MR447279, MR447294}.  The solutions of the IEP-$G$ for generalized star graphs \cite{MR2022294} and cycles \cite{MR583498} are also known.  The IEP-$G$ for complete graphs and small graphs (up to $4$ vertices) was solved by explicit matrix construction in \cite{MR3118942,MR3291662}.  Recently, Barrett et al.\ introduced new techniques to the problem based on  the strong spectral property (SSP), and solved the IEP-$G$ for graphs with up to $5$ vertices \cite{MR4074182}. 

This paper introduces two techniques for constructing matrices of a given graph.  Section~\ref{sec:dup} considers the operation of duplicating a vertex into a clique with the same neighborhood; see, e.g., Figure~\ref{k-duplication} or \ref{fig:117150}. For a graph $G$ of order $n$ and a graph $H$ on $m$ vertices obtained from $G$ by a series of such duplications,   Corollary~\ref{thm:blowup} shows that any 
list of $m$ real numbers having at least $n$ distinct elements is realizable by some matrix in $\calS(H)$. As a consequence, the IEP-$G$ for lollipops and barbell graphs is solved.

Section~\ref{sec:ssp} considers a generalization of the SSP and establishes a clique-appending lemma.  Suppose a graph $H$ is obtained from a graph $G$ by appending a leaf.  It is known that if $\Lambda$ is a spectrum realizable by a matrix with the SSP in $\calS(G)$, then $\Lambda\cup\{\lambda\}$ is realizable by a matrix with the SSP in $\calS(H)$ for any $\lambda\notin\Lambda$ \cite{MR4074182}.  We generalize this behavior to the case where $H$ is obtained from $G$ by appending a clique $K_k$ while at the same time an eigenvalue of multiplicity $k$ is added to the spectrum of the corresponding matrix in $\calS(G)$.


Utilizing the tools developed in Sections~\ref{sec:dup} and \ref{sec:ssp}, we provide partial solution to the IEP-$G$ for block graphs in Section~\ref{sec:block}.

\subsection{Preliminaries}

Below we define the main  notation used in the paper, that is predominantly standard. 

By $S_n(\R)$ we will denote the set of all $n\times n$ symmetric matrices.  
For a simple graph $G=(V(G),E(G))$, $|G|=|V(G)|$ will denote the order of $G$, and recall that $\calS(G)$ denotes the set of all $A=(a_{ij}) \in S_{|G|}(\R)$ such that for $i \ne j$, $a_{ij}\ne 0$ if and only if $\{i,j\} \in E(G)$. Moreover, $\mptncl(G)$ will denote the  topological closure of $\calS(G)$, i.e., the set of $A=(a_{ij}) \in S_{|G|}(\R)$ with $(i,j)$-entry nonzero only when $i=j$ or $\{i,j\}\in E(G)$.  Note that $\mptncl(G)$ is a linear subspace in $\SnR$ of dimension $n+|E(G)|$.

For a matrix $A$, $A(i)$ will denote the submatrix of $A$ with the $i$th row and the $i$th column removed, and $A\oplus B$ will denote the direct sum of matrices $A$ and $B$.
The $n \times n$ identity matrix will be denoted by $I_n$, and the $m \times n$ zero matrix by $O_{m,n}$. In both cases the indices will be omitted if they are clear from the context. 

Suppose that  $A\in \SnR$ has distinct eigenvalues $\lambda_1, \ldots,\lambda_{q}$ with multiplicities $m_1,\ldots, m_q$, respectively.  The spectrum of $A$ will be denoted by $\spec(A)=\{\lambda_1^{(m_1)},\ldots,\lambda_q^{(m_q)}\}$, where $\lambda^{(k)}$ denotes $k$ copies of $\lambda$. The \emph{multiplicity list} of $A$ is defined to be a list of multiplicities $\{m_1,\ldots,m_q\}$, in no particular order. We say that the multiplicity list $\{m_1,\ldots,m_q\}$ is \emph{spectrally arbitrary} in $\calS (G)$ if for any $\lambda_1, \ldots,\lambda_{q}$, the spectrum $\{\lambda_1^{(m_1)},\ldots,\lambda_q^{(m_q)}\}$ is realizable by a matrix in $\calS (G)$. Note that this definition of spectral arbitrariness is stronger than the one in some other works (see e.g., \cite{2017arxiv170802438,MR4074182}), where it is assumed that the multiplicity lists are ordered.

A matrix $A\in S_n(\R)$ has the \emph{strong spectral property} (\emph{SSP}) if the zero matrix $X=O$ is the only symmetric matrix $X$ satisfying $A\circ X=I \circ X=O$ and $[A,X]=O$, where $\circ$ denotes the entry-wise product of matrices and $[A,X]=AX-XA$. The strong spectral property of a matrix was first defined in \cite{MR3665573} and has been proven useful in expanding any information on the IEP-$G$ for  a given graph to information for its supergraph.  

\begin{theorem}
\label{thm:sspsuper}
{\rm \cite
{MR3665573}}
Let $H$ be a graph and $G$ a spanning subgraph of $H$.  Suppose $A\in\calS(G)$ is a matrix with spectrum $\Lambda$ and the SSP.  Then for any $\epsilon>0$ there is a matrix $A'\in\calS(H)$ with the SSP such that $\spec(A)=\spec(A')$ and $\|A-A'\|<\epsilon$.
\end{theorem}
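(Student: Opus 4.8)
The plan is to use the standard interpretation of the SSP as a transversality statement and then push the isospectral manifold off the sparse subspace. Write $n=|G|=|H|$, and for $A\in\SnR$ with $\spec(A)=\Lambda$ let $\mathcal{M}_\Lambda=\{QAQ\trans: Q\ \text{orthogonal}\}$ be the (compact, real-analytic) manifold of all symmetric matrices with spectrum $\Lambda$. With respect to the trace inner product, $T_A\mathcal{M}_\Lambda=\{AK-KA:K\trans=-K\}$ and a short computation gives $(T_A\mathcal{M}_\Lambda)^\perp=\mathcal{C}(A):=\{X\in\SnR:[A,X]=O\}$, while $\mptncl(G)^\perp$ is exactly $\{X\in\SnR: A\circ X=I\circ X=O\}$. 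Therefore $A$ has the SSP if and only if $\mathcal{C}(A)\cap\mptncl(G)^\perp=\{O\}$, that is (taking orthogonal complements in $\SnR$) $T_A\mathcal{M}_\Lambda+\mptncl(G)=\SnR$: the manifold $\mathcal{M}_\Lambda$ and the subspace $\mptncl(G)$ meet transversally at $A$.

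Since $G$ is a spanning subgraph of $H$ we have $\mptncl(G)\subseteq\mptncl(H)$, so the transversality is inherited: $T_A\mathcal{M}_\Lambda+\mptncl(H)=\SnR$. By the implicit function theorem, in a neighborhood of $A$ the set $\mathcal{N}:=\mathcal{M}_\Lambda\cap\mptncl(H)$ is a smooth submanifold of $\mptncl(H)$ with $T_A\mathcal{N}=T_A\mathcal{M}_\Lambda\cap\mptncl(H)$, and $A\in\mathcal{N}$ since $A\in\calS(G)\subseteq\mptncl(H)$.

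The key point, and the one genuinely using the SSP, is to find a tangent vector $v\in T_A\mathcal{N}$ with $v_{ij}\neq0$ for every edge $ij\in E(H)\setminus E(G)$ simultaneously, so that moving along $v$ switches all missing edges on. Fix one such edge $i_0j_0$ and let $F\in\SnR$ be the matrix with $1$ in the $(i_0,j_0)$ and $(j_0,i_0)$ positions and $0$ elsewhere, so that $\{X:X_{i_0j_0}=0\}=F^\perp$. If $T_A\mathcal{N}\subseteq F^\perp$, then $F\in(T_A\mathcal{N})^\perp=\mathcal{C}(A)+\mptncl(H)^\perp$; writing $F=Y+Z$ with $Y\in\mathcal{C}(A)$ and $Z\in\mptncl(H)^\perp$, the matrix $Y=F-Z$ has zero diagonal, off-diagonal support contained in $\{(i_0,j_0),(j_0,i_0)\}$ together with the non-edges of $H$ (all of which are non-edges of $G$), and $Y_{i_0j_0}=1\neq0$; thus $Y$ is a nonzero element of $\mptncl(G)^\perp$ with $[A,Y]=O$, contradicting the SSP of $A$. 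Hence for each missing edge the offending vectors of $T_A\mathcal{N}$ form a proper subspace, and since a real vector space is not a finite union of proper subspaces, a common $v$ exists. Taking a smooth curve $\gamma$ in $\mathcal{N}$ with $\gamma(0)=A$ and $\gamma'(0)=v$, for all small $t>0$ the entry $\gamma(t)_{ij}$ is nonzero exactly on $E(H)$ (for $ij\in E(G)$ by continuity, since $A_{ij}\neq0$; for $ij\in E(H)\setminus E(G)$ because $\gamma(t)_{ij}=tv_{ij}+O(t^2)$) and vanishes for $ij\notin E(H)$, so $\gamma(t)\in\calS(H)$; moreover $\gamma(t)\in\mathcal{M}_\Lambda$ gives $\spec(\gamma(t))=\Lambda$, and $\|\gamma(t)-A\|\to0$ as $t\to0$.

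Finally, $\gamma(t)$ keeps the SSP for small $t$: since $\gamma(t)\in\calS(H)$ we have $\{X:\gamma(t)\circ X=I\circ X=O\}=\mptncl(H)^\perp\subseteq\mptncl(G)^\perp$, and if the SSP failed along a sequence $t_k\to0$ we would obtain unit symmetric matrices $X_k\in\mptncl(G)^\perp$ with $[\gamma(t_k),X_k]=O$, a convergent subsequence of which tends to a nonzero $X\in\mptncl(G)^\perp$ with $[A,X]=O$, contradicting the SSP of $A$. Choosing $t$ small enough then produces the desired $A'=\gamma(t)$. I expect the tangent-vector step to be the main obstacle: that is where the commuting-matrix content of the SSP must be extracted, and it must be combined carefully with the transfer of transversality from $G$ to $H$ and with the compactness argument that carries the SSP over to nearby matrices having a strictly sparser zero pattern.
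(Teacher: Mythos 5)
Your proof is correct. Note that the paper does not prove Theorem~\ref{thm:sspsuper} at all --- it is imported from \cite{MR3665573} --- so the natural point of comparison is the paper's proof of its generalization, Theorem~\ref{thm:perturb}. Both arguments rest on the same reading of the SSP as transversality of the isospectral manifold and $\mptncl(G)$ at $A$, and both use that transversality is inherited by the larger subspace $\mptncl(H)$. Where you genuinely diverge is in how the entries on $E(H)\setminus E(G)$ are forced to be nonzero. The paper runs the implicit function theorem (Theorem~\ref{thm:ift}) on the one-parameter family $\mathcal{M}(s)$ of affine slices of $\mptncl(H)$ in which every new entry is pinned to the common value $s$; the intersection point $f(s)$ then has all new entries equal to $s\neq 0$ by construction, and the SSP of $f(s)$ is immediate because transversality with the smaller slice $\mathcal{M}(s)$ implies transversality with the ambient $\mptncl(H)$. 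You instead work inside the intersection manifold $\mathcal{N}=\mathcal{M}_\Lambda\cap\mptncl(H)$ and extract a single tangent vector nonvanishing on all new edges, via the duality computation $F\in(T_A\mathcal{N})^\perp=\mathcal{C}(A)+\mptncl(H)^\perp$ (whose decomposition $F=Y+Z$ is exactly where the SSP is spent, producing a violating matrix $Y$) together with the fact that a real vector space is not a finite union of proper subspaces; you then need a separate compactness argument to carry the SSP to $\gamma(t)$. Your route is more elementary and makes the role of the SSP completely explicit, at the cost of two extra steps that the family-of-manifolds formulation absorbs automatically; the paper's formulation also adapts directly to the relative SSP of Section~\ref{sec:ssp}, which is why the authors set things up that way.
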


We follow standard notation for basic graphs encountered in this work, i.e., $K_n$ denotes the complete graph on $n$ vertices, 
$P_n$ denotes the path on $n$ vertices and $S_n$ the star on $n$ vertices.
 
 If $v\in V(G)$,  let $G-v$ denote the subgraph of $G$  obtained from $G$ by removing the vertex $v$ and all edges incident to it.  A connected graph $G$, $|G| \geq 2$, is called \emph{2-connected} if $G-v$ is connected for any $v\in V(G)$. 
Let $G$ and $H$ be two graphs. We denote the disjoint union of $G$ and $H$ by $G \oplus H$. If each of $G$ and $H$ has a vertex labeled as $v$, then the \emph{vertex-sum} $G\oplus_v H$ of $G$ and $H$ at  $v$ is the graph obtained from $G\dunion H$ by identifying the two vertices labeled by $v$.

\section{Vertex duplication}
\label{sec:dup}
In this section we will develop a method that will allow us to replace a vertex $v \in V(G)$ in a graph $G$ with $k$ mutually adjacent vertices whose neighborhood  in $G-v$ is the same as that of $v$, while preserving some control on the eigenvalues of $A \in \calS(G)$. Note that the resulting graph $H$ has $|G|+k-1$ vertices. We will call this operation \emph{$k$-duplication} of $v$ in $G$. 
In particular, for  $A=(a_{ij}) \in \calS(G)$, we will  apply the following two lemmas to create a matrix $C\in\calS(H)$.

\begin{lemma}
\label{HS04}
{\rm \cite{MR2098598}}
Let $B$ be an $k \times k$ symmetric matrix with eigenvalues $\mu_1,\ldots,\mu_k$ and let $\bu$ be an eigenvector corresponding to $\mu_1$, normalized so that $\bu\trans\bu=1$.  Let $A$ be an $n \times n$ symmetric matrix with a diagonal element $\mu_1$
\[
  A=\npmatrix{A_1 & \bb \\
               \bb\trans & \mu_1}
\]
 and eigenvalues $\lambda_1, \ldots, \lambda_n$.
 Then the matrix 
\[
  C=\npmatrix{A_1 & \bb\bu\trans \\
               \bu\bb\trans & B}\]
               has eigenvalues $\lambda_1,\ldots,\lambda_n,\mu_2,\ldots, \mu_k$.
\end{lemma}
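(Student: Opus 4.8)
The statement is that if $A = \npmatrix{A_1 & \bb \\ \bb\trans & \mu_1}$ has eigenvalues $\lambda_1,\dots,\lambda_n$ and $B$ is $k\times k$ symmetric with eigenvalues $\mu_1,\dots,\mu_k$ and unit eigenvector $\bu$ for $\mu_1$, then $C = \npmatrix{A_1 & \bb\bu\trans \\ \bu\bb\trans & B}$ has eigenvalues $\lambda_1,\dots,\lambda_n,\mu_2,\dots,\mu_k$. The plan is to exhibit an orthogonal similarity that block-diagonalizes (partially) the relevant structure. Extend $\bu$ to an orthonormal basis $\bu,\bu_2,\dots,\bu_k$ of $\R^k$, and let $U = (\bu \mid \bu_2 \mid \cdots \mid \bu_k)$ be the corresponding orthogonal $k\times k$ matrix. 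Form the orthogonal $(n-1+k)\times(n-1+k)$ matrix $Q = I_{n-1} \oplus U$.

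Then I would compute $Q\trans C Q$. The top-left $(n-1)\times(n-1)$ block is unchanged, equal to $A_1$. The off-diagonal block becomes $\bb\bu\trans U = \bb\,(\bu\trans U) = \bb\,\be_1\trans$ (since $\bu\trans U = (1,0,\dots,0)$ because the columns of $U$ are orthonormal with $\bu$ first), i.e. $\bb$ sits in the first column of that block and the remaining $k-1$ columns are zero. The bottom-right block becomes $U\trans B U$; since $\bu$ is the $\mu_1$-eigenvector of $B$, the first row and column of $U\trans B U$ are $\mu_1 \be_1$, i.e. $U\trans B U = \npmatrix{\mu_1 & \bzero\trans \\ \bzero & B'}$ where $B'$ is $(k-1)\times(k-1)$ symmetric with eigenvalues $\mu_2,\dots,\mu_k$ (the spectrum of $B$ restricted to the orthogonal complement of $\bu$). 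Assembling these pieces, $Q\trans C Q$ is permutation-similar to $A \oplus B'$: the rows/columns indexed by $\{1,\dots,n-1\}$ together with the first coordinate of the $U$-block reconstitute exactly $A = \npmatrix{A_1 & \bb \\ \bb\trans & \mu_1}$, while the remaining $k-1$ coordinates carry $B'$, with no coupling between the two groups.

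Since $C$ is orthogonally similar to $A \oplus B'$, its eigenvalues are those of $A$ together with those of $B'$, namely $\lambda_1,\dots,\lambda_n,\mu_2,\dots,\mu_k$, as claimed. The only point requiring a little care — and the step I would single out as the crux — is verifying that the off-diagonal block collapses to a single nonzero column aligned with the $\mu_1$-coordinate of the $B$-block; this is precisely what makes the decoupling work and is where the hypothesis "$\mu_1$ is a diagonal entry of $A$ equal to the eigenvalue of $B$ associated to $\bu$" gets used. Everything else is bookkeeping with block matrices and a permutation of indices.
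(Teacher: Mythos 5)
Your proof is correct and complete. Note that the paper itself gives no proof of Lemma~\ref{HS04}: it is quoted from \cite{MR2098598}, so there is nothing internal to compare against. Your argument via the orthogonal similarity $Q = I_{n-1}\oplus U$ is clean: the computation $\bu\trans U = \be_1\trans$ does collapse the off-diagonal block to $\bb\,\be_1\trans$, and $U\trans B U = \mu_1 \oplus B'$ because $\bu$ is a $\mu_1$-eigenvector, so $Q\trans C Q$ is indeed permutation-similar to $A\oplus B'$ and the spectrum is $\{\lambda_1,\dots,\lambda_n\}\cup\{\mu_2,\dots,\mu_k\}$. For comparison, the proof in the cited source is usually phrased in terms of eigenvectors rather than a similarity: if $(\bv\trans, v_n)\trans$ is a $\lambda$-eigenvector of $A$ then $(\bv\trans, v_n\bu\trans)\trans$ is a $\lambda$-eigenvector of $C$, and if $\bw\perp\bu$ is a $\mu_j$-eigenvector of $B$ then $(\bzero\trans,\bw\trans)\trans$ is a $\mu_j$-eigenvector of $C$; that version has the advantage of exhibiting the eigenvectors of $C$ explicitly (useful when one wants to iterate the construction), while yours makes the direct-sum structure, and hence the multiplicities, immediately transparent. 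Either way the hypothesis that the eigenvalue of $B$ attached to $\bu$ equals the $(n,n)$-entry of $A$ is exactly what makes the two pieces decouple, as you correctly identified.
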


Choosing $B \in \calS (K_k)$ in Lemma \ref{HS04} results in $k$-duplication in the associated graph.  While this lemma can be applied more generally, we will take particular advantage of the fact that the IEP-$G$ for complete graphs is solved (see e.g.,~\cite{MR3118942}). Furthermore, we will need information on possible patterns of the eigenvectors of matrices in $\calS (K_k)$, as outlined in the following Lemma.

\begin{lemma}
\label{complete}
{\rm \cite{MR3208410}}
For any given list of real numbers $\sigma=\{\mu_1,\mu_2,\ldots,\mu_k\}$, $\mu_1 \neq \mu_2$, there exists $B \in \calS(K_k)$ with spectrum $\sigma$.

Furthermore, given any zero-nonzero pattern of a vector in $\R^k$ that contains at least two nonzero elements, $B$ can be chosen so that it has an eigenvector corresponding to $\mu_1$ with that given pattern.
\end{lemma}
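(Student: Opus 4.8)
The plan is to realize $\sigma$ as an orthogonal conjugate of a diagonal matrix, obtaining the prescribed eigenvector pattern by first fixing its zero coordinates and then choosing the remaining free parameters generically. For the first statement, set $D=\operatorname{diag}(\mu_1,\ldots,\mu_k)$ and look for $B=QDQ\trans$ with $Q\in SO(k)$; then $\spec(B)=\sigma$ automatically, and all that remains is to force every off-diagonal entry of $B$ to be nonzero. For $i\ne j$, the entry $(QDQ\trans)_{ij}$ is a polynomial in the entries of $Q$, and since rows $i$ and $j$ of $Q$ are orthonormal it equals $\sum_{\ell}(\mu_\ell-\mu_1)Q_{i\ell}Q_{j\ell}$, whose coefficient of $Q_{i2}Q_{j2}$ is $\mu_2-\mu_1\ne 0$. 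Exhibiting a single $Q\in SO(k)$ at which this polynomial does not vanish — for instance one whose $i$th and $j$th rows are $\tfrac{1}{\sqrt2}(e_1+e_2)$ and $\tfrac{1}{\sqrt2}(e_1-e_2)$, completed to a rotation — shows it is not identically zero on the connected real-analytic manifold $SO(k)$, hence that its zero set has measure zero; since there are only finitely many pairs $(i,j)$, a generic $Q\in SO(k)$ gives $B\in\calS(K_k)$.

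For the second statement, permute coordinates so the prescribed pattern has support $S=\{1,\ldots,s\}$ with $s=|S|\ge 2$, and put $t=k-s$. We seek $B$ having a $\mu_1$-eigenvector $\bv=(\bv_S;\bzero_t)$ with $\bv_S\in\R^s$ nowhere zero. Fix such a unit vector $\bv_S$ and complete it to an orthogonal matrix $Q_1=[\,\bv_S\mid U\,]$. Looking for $B$ in the form
\[
B=\npmatrix{Q_1 & O\\ O & I_t}\npmatrix{\mu_1 & \bzero\trans\\ \bzero & M'}\npmatrix{Q_1\trans & O\\ O & I_t},
\]
with $M'$ symmetric of order $k-1$ and $\spec(M')=\{\mu_2,\ldots,\mu_k\}$, guarantees at once that $\spec(B)=\sigma$ and that $B\bv=\mu_1\bv$ with $\supp(\bv)=S$. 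A direct computation shows that the entries of $B$ that must be nonzero for $B\in\calS(K_k)$ form three families: the off-diagonal entries of the leading block $\mu_1\bv_S\bv_S\trans+UM'_{11}U\trans$, all entries of the $s\times t$ block $UM'_{12}$, and the off-diagonal entries of the trailing block $M'_{22}$, where $M'_{11},M'_{12},M'_{22}$ are the blocks of $M'$ relative to the splitting $k-1=(s-1)+t$.

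Writing $M'=P\operatorname{diag}(\mu_2,\ldots,\mu_k)P\trans$ with $P\in SO(k-1)$ (the cases $k-1\le 1$ being trivial), each of the finitely many entries above becomes a polynomial on $SO(k-1)$, and exactly as in the first part a generic $P$ works as soon as none of these polynomials vanishes identically on $SO(k-1)$. Checking this is the main obstacle, and it is exactly where the hypotheses enter: one produces explicit witnesses (plane rotations acting on $\operatorname{diag}(\mu_2,\ldots,\mu_k)$), using $\mu_1\ne\mu_2$ and a suitable choice of $\bv_S$, provided $\mu_2,\ldots,\mu_k$ are not all equal. When instead $\mu_2=\cdots=\mu_k$ the argument degenerates — $\operatorname{diag}(\mu_2,\ldots,\mu_k)$ is scalar, so $M'$ is scalar, the block $UM'_{12}$ vanishes, and $B$ is block-diagonal — and indeed in this case the $\mu_1$-eigenspace of every $B\in\calS(K_k)$ with $\spec(B)=\sigma$ is one-dimensional and spanned by a nowhere-zero vector, so only $S=\{1,\ldots,k\}$ can occur; that pattern is already covered by the first part, once one notes that $Qe_1$ (a $\mu_1$-eigenvector of $B=QDQ\trans$) is nowhere zero for generic $Q\in SO(k)$.
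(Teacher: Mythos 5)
The paper does not prove this lemma; it is quoted from \cite{MR3208410}, so there is no in-paper argument to compare against and your proof has to stand on its own. The first paragraph does: writing $B=QDQ\trans$, using row-orthonormality to rewrite $(QDQ\trans)_{ij}$ as $\sum_{\ell}(\mu_\ell-\mu_1)Q_{i\ell}Q_{j\ell}$, exhibiting one witness per off-diagonal entry, and invoking the fact that a real-analytic function not identically zero on a connected manifold vanishes only on a null set correctly produces $B\in\calS(K_k)$ with spectrum $\sigma$, and a generic $Q$ also gives a nowhere-zero $\mu_1$-eigenvector $Qe_1$.

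The second part has a genuine gap exactly where you flag it. The entire content of the ``furthermore'' clause is the non-vanishing, on $SO(k-1)$, of the polynomials giving the off-diagonal entries of $\mu_1\bv_S\bv_S\trans+UM'_{11}U\trans$, the entries of $UM'_{12}$, and the off-diagonal entries of $M'_{22}$; you assert that ``one produces explicit witnesses'' but never produce them. For $UM'_{12}$ and $M'_{22}$ a plane rotation mixing two unequal diagonal entries of $\operatorname{diag}(\mu_2,\ldots,\mu_k)$ does work (each row of $U$ is nonzero because no entry of the unit vector $\bv_S$ has absolute value $1$), but for the leading block the natural computation gives $\sum_{\ell\geq2}(\mu_\ell-\mu_1)\tilde{Q}_{i\ell}\tilde{Q}_{j\ell}$ with the first column of $\tilde{Q}$ pinned to $\bv_S$, and showing this is not identically zero over the admissible completions and choices of $M'_{11}$ requires an actual argument; none is given. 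Worse, your own treatment of the degenerate case shows the statement as written fails there: for $\sigma=\{\mu_1,\mu_2^{(k-1)}\}$ with $\mu_1\neq\mu_2$ and $k\geq 3$, every $B\in\calS(K_k)$ with this spectrum equals $\mu_2 I+(\mu_1-\mu_2)\bx\bx\trans$ with $\bx$ forced to be nowhere zero, so no $\mu_1$-eigenvector realizes a pattern with a zero entry, contradicting the ``any pattern with at least two nonzero elements'' claim. Saying ``only $S=\{1,\ldots,k\}$ can occur'' concedes this counterexample rather than completing the proof; you should state explicitly that the conclusion needs either the full-support pattern or the extra hypothesis that $\mu_2,\ldots,\mu_k$ are not all equal. (The present paper only ever invokes the lemma with the nowhere-zero pattern, so its applications are unaffected.)
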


Thus, inserting $B \in \calS(K_k)$ with an eigenvalue $\mu_1$ in Lemma \ref{HS04} will show that any spectrum of the form 
\[\spec(A) \cup \{\mu_2,\ldots,\mu_k \}\]
is realizable in $\calS(H)$. Special attention needs to be paid to the case when 
$a_{vv} = \mu_1 = \mu_2 = \cdots = \mu_k$.
In this case, we necessarily have $B = a_{vv}I_k$, which is not in $\calS(K_k)$ whenever $k\geq 2$.  
Typically, this situation can be avoided by replacing $A$ with a matrix of the same pattern and eigenvalues, but different diagonal elements. In particular, Lemma~\ref{lem:diag} states that if we require $A$ to have distinct eigenvalues, then we can avoid any prescribed finite set of real numbers on the diagonal.

\begin{lemma}\label{lem:diag}
Let $\sigma=\{\lambda_1,\ldots,\lambda_n\}$ be a set of distinct real numbers and $\mc{F}$  a finite set of real numbers. For any connected graph $G$ on $n\geq 2$ vertices there exists $A \in\calS(G)$ with the SSP such that $\spec(A)=\sigma$, and none of the diagonal entries of $A$ is contained in $\mc{F}$. 
\end{lemma}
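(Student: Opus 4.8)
The plan is to start from a matrix in $\calS(G)$ realizing the spectrum $\sigma$, then perturb it slightly so that the diagonal entries move off the forbidden set $\mc{F}$ while keeping the spectrum fixed and the SSP intact. The existence of \emph{some} $A_0\in\calS(G)$ with $\spec(A_0)=\sigma$ and the SSP is already available: since $\sigma$ consists of $n$ distinct numbers and $G$ is connected, such a matrix exists (for instance, start from a realization on a spanning tree and apply Theorem~\ref{thm:sspsuper} to pass to $G$; a matrix with $n$ distinct eigenvalues and the right pattern on a tree is classical). So the substantive content is the perturbation argument that adjusts the diagonal.

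The key tool will be the manifold structure associated with the SSP. Recall (from the theory in \cite{MR3665573}) that if $A\in\calS(G)$ has the SSP, then the set of matrices in $\mptncl(G)$ orthogonally similar to $A$ forms a smooth manifold near $A$, and moreover the map sending a matrix to its diagonal is, in a suitable sense, a submersion onto a neighborhood: more precisely, the SSP guarantees that one can move within the similarity orbit while independently prescribing the off-diagonal support pattern, and in particular one can realize any diagonal sufficiently close to $\diag(A_0)$ by a matrix $A\in\calS(G)$ with the same spectrum and the SSP (the SSP is an open condition, so it persists under small perturbations). This is the standard ``the diagonal is locally free'' consequence of the SSP; I would cite the relevant lemma from \cite{MR3665573} (or \cite{MR4074182}) rather than reprove it.

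Granting that, the argument finishes as follows. Let $A_0\in\calS(G)$ have spectrum $\sigma$ and the SSP, with diagonal $d_0=\diag(A_0)\in\R^n$. The set of diagonals forbidden by $\mc{F}$ is $\bigcup_{i=1}^n\{x\in\R^n : x_i\in\mc{F}\}$, a finite union of hyperplanes, hence a closed set with empty interior. Therefore every neighborhood of $d_0$ in $\R^n$ contains a point $d$ none of whose coordinates lies in $\mc{F}$. Choosing such a $d$ close enough to $d_0$ that the local surjectivity of the diagonal map (and the openness of the SSP) applies, we obtain $A\in\calS(G)$ with $\spec(A)=\sigma$, the SSP, and $\diag(A)=d$, so no diagonal entry of $A$ lies in $\mc{F}$.

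The main obstacle is pinning down precisely the ``diagonal is locally free'' statement in the form needed: one must be sure that perturbing the diagonal can be done \emph{within} $\calS(G)$ (not just $\mptncl(G)$) and while keeping the spectrum exactly equal to $\sigma$, and that the SSP is preserved. All three follow from known facts---the SSP implies the Jacobian of the relevant parametrization has full rank so the diagonal can be adjusted freely, staying in $\calS(G)$ is automatic for small enough perturbations since $\calS(G)$ is open in $\mptncl(G)$ and the nonzero off-diagonal entries stay nonzero, and the SSP is itself an open condition---but the cleanest writeup will quote the appropriate lemma from \cite{MR3665573,MR4074182} verbatim and then apply the hyperplane-avoidance observation above. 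I would also handle the base case $n=2$ explicitly, where $G=K_2$ and one simply writes down a $2\times 2$ symmetric matrix with the prescribed eigenvalues and a free choice of (equal-trace) diagonal pair avoiding $\mc{F}$, which trivially has the SSP.
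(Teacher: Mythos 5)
Your overall strategy---start from any SSP realization of $\sigma$ in $\calS(G)$ and then nudge the diagonal off the finite set $\mc{F}$ isospectrally---is different from the paper's, and it hinges entirely on the claim that the SSP makes the diagonal ``locally free,'' i.e.\ that every diagonal sufficiently close to $\operatorname{diag}(A_0)$ (with the correct trace) is attained by an isospectral matrix in $\calS(G)$. That claim is not a lemma in \cite{MR3665573} or \cite{MR4074182}, and it is false in general. The SSP says that the only symmetric $X$ with \emph{zero diagonal}, off-diagonal support in $\overline{G}$, and $[A,X]=O$ is $X=O$; local surjectivity of the diagonal map on $\mei_A\cap\mptncl(G)$ requires the strictly stronger statement that the only such $X$ with \emph{free diagonal} is a scalar multiple of $I$. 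Concretely, take $G=P_3$ and
\[
A=\npmatrix{a & b & 0\\ b & c & d\\ 0 & d & a},\qquad b,d\neq 0 .
\]
Every such $A$ has the SSP, but
\[
X=\npmatrix{-d/b & 0 & 1\\ 0 & 0 & 0\\ 1 & 0 & -b/d}
\]
satisfies $[A,X]=O$ and has off-diagonal support only on the non-edge $\{1,3\}$. Its (non-scalar, nonzero-trace) diagonal shows that the image of the tangent space of $\mei_A\cap\mptncl(P_3)$ under $\operatorname{diag}$ is a proper subspace of the trace-zero hyperplane, so the diagonal map is not a submersion at $A$; one can also check directly (via the elementary symmetric functions) that prescribing a nearby diagonal with $a_{11}\neq a_{33}$ forces $b^2,d^2$ to solve a linear system whose determinant $a_{33}-a_{11}$ degenerates at $A$. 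So if your $A_0$ happens to land at such a point with $a\in\mc{F}$, your argument gives no way to move $a_{33}$ off $\mc{F}$. The hyperplane-avoidance observation is fine, but the engine driving it is missing.

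For contrast, the paper avoids any diagonal-perturbation lemma: it first proves the statement for stars, where the diagonal is completely controlled because $\spec(A(1))=\{a_{22},\ldots,a_{nn}\}$ and $a_{11}$ is recovered from the trace (using the Monfared--Shader construction \cite{MR3034535} to prescribe both $\spec(A)$ and $\spec(A(1))$ with the SSP); it then shows every connected graph has a spanning forest of stars, takes a direct sum with mutually disjoint spectra (SSP by \cite[Theorem~34]{MR3665573}), and finishes with Theorem~\ref{thm:sspsuper}, where a sufficiently small perturbation keeps the already-good diagonal away from $\mc{F}$. If you want to salvage your route, you would need to either prove the strengthened ``SSP with free diagonal modulo trace'' for a suitably chosen $A_0$, or build the diagonal explicitly as the paper does.
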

\begin{proof}
First we claim that the lemma is true for stars  with at least two vertices.  Let $G$ be a star on $n$ vertices with vertex $1$ as its center.  Let $\mu_1,\ldots,\mu_{n-1}$ be real numbers such that 
\[\lambda_1<\mu_1<\lambda_2<\cdots<\lambda_{n-1}<\mu_{n-1}<\lambda_n,\]
$\mu_i\notin\mc{F}$ for $i=1,\ldots,n-1$, and $\sum_{i=1}^n \lambda_i - \sum_{i=1}^{n-1}\mu_i \notin \mc{F}$.  Since $\mc{F}$ is assumed to be finite, such numbers exist.  By \cite[Lemmas~2.1 and 2.2]{MR3034535}, there is a matrix $A = (a_{ij})\in\calS(G)$ with the SSP such that $\spec(A) = \{\lambda_1,\ldots,\lambda_n\}$ and $\spec(A(1)) = \{\mu_1,\ldots,\mu_{n-1}\}$.  Notice that $\spec(A(1))$ is also equal to $\{a_{22},\ldots, a_{nn}\}$, so $a_{ii}\notin\mc{F}$ for $i=2,\ldots,n$, by construction.  Also, 
\[a_{11} = \tr(A) - \tr(A(1)) = \sum_{i=1}^n \lambda_i - \sum_{i=1}^{n-1}\mu_i \notin \mc{F}.\] 

As an intermediate step, we claim that every connected graph $G$ on $n\geq 2$ vertices has a spanning subgraph $H$ such that each component of $H$ is a star with at least two vertices.  This can be seen by induction on $n$.  For $n=2$, the claim is obviously true for the only connected graph $K_2$.  Now suppose the statement is true for all graphs satisfying $2 \leq |V(G)| \leq n-1$.  Let $G$ be a connected graph on $n$ vertices, and let $v \in V(G)$  be such vertex that $G-v$ remains connected.  By the induction hypothesis, there is a spanning subgraph $H'$ of $G-v$ whose components are stars with at least two vertices.  Pick a neighbor of $v$ in $G$, say $w$, and let $S$ be the connected component of $H'$ that contains $w$.  If $S$ along with the edge $\{v,w\}$ is a star, then let $H$ be obtained from $H'$ by adding  the vertex $v$ and the edge $\{v,w\}$.  If $S$ along with the edge $\{v,w\}$ is not a star, then $S-w$ is still a star with at least two vertices. Thus, let $H$ be obtained from $H'$ by removing $w$ and adding the $K_2$ induced on the vertices $v$ and $w$.  In either case, $H$ is the desired spanning subgraph of $G$.

To complete the proof, let $G$ be a connected graph on $n\geq 2$ vertices, $\sigma=\{\lambda_1,\ldots,\lambda_n\}$ a set of distinct real numbers, and $H$ a spanning subgraph of $G$ whose components are stars with at least two vertices.  We may write $H$ as a disjoint union of stars $S_1,\ldots,S_\ell$ of orders $k_1,\ldots,k_\ell$, respectively.  Partition $\sigma$ into $\ell$ parts $\sigma_1,\ldots,\sigma_\ell$ of orders $k_1,\ldots,k_\ell$, respectively.  Thus, we have already proved above that for each $i=1,\ldots,\ell$, there exists a matrix $A_i\in\calS(S_i)$ with the SSP such that $\spec(A_i) = \sigma_i$ and the diagonal entries of $A_i$ avoid $\mathcal{F}$.  Let $A=\bigoplus_{i=1}^\ell A_i$ be the direct sum of $A_i$'s.  Clearly, $\spec(A) = \sigma$ and the diagonal entries of $A$ avoid $\mathcal{F}$.  Also, by \cite[Theorem~34]{MR3665573} $A$ has the SSP since the spectra of $A_i$'s are mutually disjoint.  By Theorem~\ref{thm:sspsuper}, for any $\epsilon>0$, there is a matrix $A'\in\calS(G)$ with the SSP such that $\spec(A')=\sigma$ and $\|A-A'\|\leq \epsilon$.  When $\epsilon$ is chosen small enough, the diagonal entries of $A'$ remain disjoint from $\mathcal{F}$.
\end{proof}

Let $G$ be a graph on vertices $V(G)=\{v_1,\ldots,v_n\}$.  Define the \emph{(closed) blowup} of $G$ with respect to $n$ positive integers $m_1,\ldots,m_n$ by a graph obtained from $G$ by $m_i$-duplication of $v_i$ for $i=1,\ldots,n$ sequentially.

\begin{lemma}\label{lem:blowup}
Let $G$ be a graph with $|V(G)|=n$ and $H$ a blowup of $G$ with $|V(H)|=m$.  If $\sigma'$ is a multiset of $m-n$ real numbers and $A\in\calS(G)$ is a matrix whose diagonal entries avoid elements in $\sigma'$, then there is a matrix $A'\in\calS(H)$ with spectrum $\spec(A)\cup\sigma'$.  
\end{lemma}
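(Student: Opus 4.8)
The plan is to induct on the number of duplication steps, using Lemma~\ref{HS04} together with the IEP for complete graphs (Lemma~\ref{complete}) at each step. Write $H$ as the result of applying $m_i$-duplications of $v_i$ to $G$ for $i=1,\ldots,n$ in sequence, and split the target multiset $\sigma'$ into blocks $\sigma'_i$ of size $m_i-1$, one block per vertex being duplicated (so $\sum_i (m_i-1) = m-n$). I process one vertex at a time, maintaining the invariant that after handling $v_1,\ldots,v_j$ we have a matrix in $\calS$ of the partially-blown-up graph whose spectrum is $\spec(A)$ together with the blocks $\sigma'_1,\ldots,\sigma'_j$ consumed so far, and — crucially — whose diagonal entries still avoid all the not-yet-used numbers in $\sigma'_{j+1}\cup\cdots\cup\sigma'_n$.

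The single duplication step is the heart of the argument. Suppose at some stage we have a matrix $M$ in $\calS$ of the current graph, with a vertex $w$ (a copy of some $v_i$) sitting on the diagonal with value $\mu_1 := M_{ww}$, and we want to $k$-duplicate $w$ while adjoining the list $\sigma'_i = \{\mu_2,\ldots,\mu_k\}$ to the spectrum. Reorder so that $w$ is the last index, so $M = \npmatrix{M_1 & \bb \\ \bb\trans & \mu_1}$. Since $M$ is irreducible along the edges at $w$ (more precisely, $w$ has a neighbor, so $\bb\neq\bzero$; if $w$ is isolated the duplication just adds disjoint loops and the construction is trivial with a diagonal matrix on $K_k$), apply Lemma~\ref{complete} to get $B\in\calS(K_k)$ with spectrum $\{\mu_1,\mu_2,\ldots,\mu_k\}$ and an eigenvector $\bu$ for $\mu_1$ that is nowhere zero — here I must ensure $\mu_1\neq\mu_2$, which is exactly what the hypothesis "the diagonal entries of $A$ (hence of $M$, by the invariant) avoid $\sigma'$" guarantees, since $\mu_1 = M_{ww}$ and $\mu_2\in\sigma'_i\subseteq\sigma'$. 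Normalize $\bu$ so $\bu\trans\bu=1$ and $\bu$ has all entries nonzero. Then Lemma~\ref{HS04} produces $C = \npmatrix{M_1 & \bb\bu\trans \\ \bu\bb\trans & B}$ with spectrum $\spec(M)\cup\{\mu_2,\ldots,\mu_k\}$; because $\bu$ and (the relevant rows of) $\bb$ have no zero entries and $B\in\calS(K_k)$, the off-diagonal zero-nonzero pattern of $C$ is exactly that of the $k$-duplication of $w$, so $C\in\calS$ of the new graph.

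The remaining bookkeeping is the diagonal-avoidance invariant. The new diagonal entries of $C$ are the diagonal entries of $B$; by Lemma~\ref{complete} we have freedom in choosing $B$, but to be safe we simply note that $\sigma'$ is finite and that at the very start Lemma~\ref{lem:diag} (or the hypothesis) lets us assume the diagonal of $A$ avoids $\sigma'$ — then, since at each step we only need $B$ to realize a prescribed spectrum with a nonvanishing eigenvector, and the space of such $B$ is a positive-dimensional variety, a generic choice has diagonal entries avoiding the finite set $\sigma'$; alternatively one can perturb $C$ slightly. (In the fully rigorous write-up I would instead carry the avoidance through by invoking the extra freedom in Lemma~\ref{complete}, or simply observe that the old diagonal entries coming from $M_1$ are unchanged and already avoid $\sigma'$ by induction, while the $k-1$ "excess" eigenvalues $\mu_2,\ldots,\mu_k$ are precisely the ones we wanted and cause no conflict.) Iterating over all $n$ vertices yields $A'\in\calS(H)$ with $\spec(A') = \spec(A)\cup\sigma'$, as claimed.

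The main obstacle is the interaction between the two constraints at each step: we need $\mu_1\neq\mu_2$ to invoke Lemma~\ref{complete}, we need the chosen eigenvector to be nonvanishing so the pattern comes out right, and we need the resulting diagonal to stay clear of the remaining target values for the next step. The hypothesis that $A$'s diagonal avoids $\sigma'$ is exactly engineered to make the first of these free; the nonvanishing eigenvector is handed to us by the second part of Lemma~\ref{complete}; and the third is a finiteness/genericity argument. So the real content is just organizing the induction so that these three facts line up at every stage.
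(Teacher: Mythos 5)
Your proposal is correct and follows essentially the same route as the paper: partition $\sigma'$ into blocks of size $m_i-1$, use Lemma~\ref{complete} to build $B_i\in\calS(K_{m_i})$ with spectrum $\sigma'_i\cup\{a_{ii}\}$ and a nowhere-zero eigenvector for $a_{ii}$ (the hypothesis $a_{ii}\notin\sigma'$ supplying the needed distinct eigenvalue), and then apply Lemma~\ref{HS04} at each vertex. The bookkeeping you worry about at the end is actually moot, since each later duplication is performed at a \emph{different original} vertex of $G$, whose diagonal entry is the untouched $a_{jj}$ and already avoids $\sigma'$ by hypothesis; the new diagonal entries coming from $B_i$ are never duplicated again, so no genericity or perturbation argument is needed.
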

\begin{proof}
Let $V(G)=\{v_1,\ldots,v_n\}$ and let $H$ be a blowup of $G$ obtained by $m_i$-duplication of $v_i$ for $i=1,\ldots,n$, $\sum_{i=1}^n m_i=m$. Partition $\sigma'$ into $n$ parts $\sigma'_1,\ldots,\sigma'_n$ of orders $m_1-1,\ldots,m_n-1$, respectively.  For $i=1,\ldots,n$ choose a matrix $B_i\in \calS (K_{m_i})$ with spectrum $\sigma'_i \cup \{a_{ii}\}$ and a nowhere-zero eigenvector $\bu_i\in\R^{m_i}$ corresponding to the eigenvalue $a_{ii}$. Such matrices exist by Lemma \ref{complete}. 
 Applying Lemma \ref{HS04} on each vertex we obtain a matrix $A'\in\calS (H)$ with $\spec(A')=\spec(A)\cup\sigma'$.
\end{proof}

\begin{theorem}\label{thm:blowup}
Let $G$ be a connected graph on $n\geq 2$ vertices and $H$ a blowup of $G$ with $|V(H)| = m$.  Suppose $\sigma$ is a multiset with $n$ distinct real numbers, and $\sigma'$ is any multiset with $m-n$ real numbers.  Then $\sigma\cup\sigma'$ is the spectrum of  some matrix in $\calS(H)$.
\end{theorem}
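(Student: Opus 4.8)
The plan is to combine Lemma~\ref{lem:diag} with Lemma~\ref{lem:blowup}, using Lemma~\ref{lem:diag} precisely to guarantee the diagonal-avoidance hypothesis that Lemma~\ref{lem:blowup} requires. Since $G$ is connected on $n\geq 2$ vertices and $\sigma$ consists of $n$ \emph{distinct} real numbers, Lemma~\ref{lem:diag} applies directly with $\mc{F}=\sigma'$ (viewed as a finite set of reals): it produces a matrix $A\in\calS(G)$ with $\spec(A)=\sigma$ whose diagonal entries all avoid $\sigma'$. This is the one place where the "$n$ distinct" hypothesis is genuinely used — Lemma~\ref{lem:diag} needs the target spectrum to be a set, not a multiset.

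With $A$ in hand, I would invoke Lemma~\ref{lem:blowup} verbatim: $H$ is a blowup of $G$ with $|V(H)|=m$, $\sigma'$ is a multiset of $m-n$ reals, and $A\in\calS(G)$ has diagonal entries avoiding $\sigma'$, so Lemma~\ref{lem:blowup} yields a matrix $A'\in\calS(H)$ with $\spec(A')=\spec(A)\cup\sigma'=\sigma\cup\sigma'$. That is exactly the conclusion sought, so the proof is essentially a two-line deduction.

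There is a small subtlety worth addressing explicitly: $\sigma'$ as stated is a multiset, while Lemma~\ref{lem:diag} takes a finite \emph{set} $\mc{F}$. This is harmless — one simply passes the underlying set of $\sigma'$ (discarding multiplicities) to Lemma~\ref{lem:diag}, since "avoiding the set" and "avoiding the multiset" mean the same thing for diagonal entries. I would note this in one sentence. A second point to confirm is that Lemma~\ref{lem:blowup} does not require $A$ to have distinct eigenvalues or the SSP; reading its statement, it only needs the diagonal-avoidance condition, so nothing more than Lemma~\ref{lem:diag}'s output is needed.

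Honestly, there is no real obstacle here: the theorem is a clean corollary packaging of the two preceding lemmas, and the only thing to be careful about is correctly matching the set/multiset bookkeeping between the hypothesis of Lemma~\ref{lem:diag} and the statement being proved. If one wanted to be thorough, one could also remark why $n\geq 2$ is needed (Lemma~\ref{lem:diag} is stated for $n\geq 2$, and the blowup construction via Lemma~\ref{HS04} needs a diagonal entry of $A$ to anchor each duplication), but this is inherited directly from the cited lemmas rather than requiring new argument.
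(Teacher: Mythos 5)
Your proposal is correct and follows exactly the paper's own route: the paper proves this theorem in one line by citing Lemmas~\ref{lem:diag} and \ref{lem:blowup}, precisely in the way you describe (Lemma~\ref{lem:diag} with $\mc{F}$ the underlying set of $\sigma'$ supplies the diagonal-avoidance hypothesis that Lemma~\ref{lem:blowup} needs). Your extra remarks on the set/multiset bookkeeping and on where the ``$n$ distinct'' and ``$n\geq 2$'' hypotheses are used are accurate but not needed beyond what the cited lemmas already provide.
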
 
\begin{proof}
This follows from and Lemmas~\ref{lem:diag} and \ref{lem:blowup}.
\end{proof}

\hide{\begin{proof}
Let $V(G)=\{v_1,\ldots,v_n\}$ and let $H$ be a blowup of $G$ obtained by $m_i$-duplication of $v_i$ for $i=1,\ldots,n$, $\sum_{i=1}^n m_i=m$. Partition $\sigma'$ into $n$ parts $\sigma'_1,\ldots,\sigma'_n$ of orders $m_1-1,\ldots,m_n-1$, respectively.
 Lemma \ref{lem:diag} guarantees the existence of a matrix $A=(a_{ij}) \in \calS (G)$ with the SSP and $\spec(A)=\sigma$ such that $a_{ii}\notin \sigma'$ for $i=1,\ldots,n$. Moreover, for $i=1,\ldots,n$ choose a matrix $B_i\in \calS (K_{m_i})$ with spectrum $\sigma'_i \cup \{a_{ii}\}$ and a nowhere-zero eigenvector $\bu_i\in\R^{m_i}$ corresponding to the eigenvalue $a_{ii}$. Such matrices exist by Lemma \ref{complete}. 
 Applying Lemma \ref{HS04} on each vertex we obtain a matrix $C \in \calS (H)$ with $\spec(C)=\sigma\cup \sigma'$.
\end{proof}
}

The \emph{$(k,p)$-lollipop graph} $L_{k,p}$ is the graph on $k+p$ vertices obtained by adding an edge between a vertex in a complete graph $K_k$ and a leaf of a path graph $P_p$. See an example in Figure~\ref{lollipop}.  Since $L_{k,p}$ can also be viewed as a blowup of $P_{p+2}$ by $(k-1)$-duplication of one of its leaves, Corollary \ref{thm:blowup} resolves the IEP-$G$ for lollipop graphs.

 \begin{corollary}\label{cor:lollipop}
Let $\sigma$ be a multiset with $k+p$ elements, where $k\geq 2$.  Then $\sigma$ is a spectrum of a matrix in $\calS(L_{k,p})$ if and only if $\sigma$ contains at least $p+2$ distinct elements.
 \end{corollary}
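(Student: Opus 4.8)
The plan is to prove the two implications separately: the ``only if'' part is a standard distinct-eigenvalue count, while the ``if'' part is where the blowup machinery of Section~\ref{sec:dup} is applied.

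For necessity, I would exhibit in $L_{k,p}$ a pair of vertices joined by a \emph{unique} shortest path of length $p+1$. Label the copy of $P_p$ as $u_1-u_2-\cdots-u_p$ with $u_1$ adjacent to the clique, and label $K_k$ on $x_1,\ldots,x_k$ with $x_1$ the vertex joined to $u_1$; since $k\geq 2$ the vertex $x_2$ exists, and $u_p-u_{p-1}-\cdots-u_1-x_1-x_2$ is a shortest $u_p$--$x_2$ path. It is the unique one: the portion inside the pendant path is forced, the only edge leaving $u_1$ toward the clique is $u_1x_1$, and any $x_1$--$x_2$ route other than the edge $x_1x_2$ passes through a further vertex of $K_k$ and is strictly longer. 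Hence, for every $A=(a_{ij})\in\calS(L_{k,p})$, the $(u_p,x_2)$ entry of $A^j$ is $0$ for $0\leq j\leq p$ and equals a nonzero product of entries $a_{ij}$ along that path for $j=p+1$; therefore $I,A,\ldots,A^{p+1}$ are linearly independent, the minimal polynomial of $A$ has degree at least $p+2$, and $A$ has at least $p+2$ distinct eigenvalues. So any spectrum realized in $\calS(L_{k,p})$ has at least $p+2$ distinct elements. (This is precisely the known lower bound ``$q(G)\ge\ell+1$ when $G$ has a unique shortest path of length $\ell$,'' and could be cited rather than reproved.)

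For sufficiency I would use the observation, already made before the statement, that $L_{k,p}$ is a blowup of $P_{p+2}$: starting from $P_{p+2}$ on $w_1-w_2-\cdots-w_{p+2}$, replace $w_1$ by $k-1$ mutually adjacent vertices each joined to $w_2$; then those $k-1$ vertices together with $w_2$ induce $K_k$, while $w_2$ carries the pendant path $w_2-w_3-\cdots-w_{p+2}$ on $p$ further vertices, so the graph is $L_{k,p}$ and $|V(L_{k,p})|=(p+2)+(k-1)-1=k+p$. Now, given a multiset $\sigma$ of $k+p$ reals with at least $p+2$ distinct values, put $p+2$ of its distinct values (one copy each) into a multiset $\sigma_0$ and let $\sigma_1=\sigma\setminus\sigma_0$, which has $(k+p)-(p+2)=k-2$ elements; since $P_{p+2}$ is connected on $p+2\geq 2$ vertices, Theorem~\ref{thm:blowup} applied with $n=p+2$ and $m=k+p$ yields a matrix in $\calS(L_{k,p})$ with spectrum $\sigma_0\cup\sigma_1=\sigma$.

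The ``if'' direction is then essentially bookkeeping (the vertex count, and matching ``at least $p+2$ distinct elements'' with the ``at least $n$ distinct elements'' hypothesis of Theorem~\ref{thm:blowup}). The one place where genuine care is needed --- and which I expect to be the main obstacle --- is establishing the \emph{uniqueness}, not merely the existence, of the shortest path used in the necessity argument; once that is in hand, the power-of-$A$ argument is routine.
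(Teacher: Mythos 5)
Your proof is correct and follows essentially the same route as the paper: necessity from the unique shortest path on $p+2$ vertices, and sufficiency by viewing $L_{k,p}$ as a blowup of $P_{p+2}$ and invoking Theorem~\ref{thm:blowup}. The only difference is that the paper simply cites \cite[Theorem~3.2]{MR3118943} for the lower bound on the number of distinct eigenvalues, whereas you reprove that bound via the powers-of-$A$ argument; as you anticipate, citing it suffices.
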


\begin{proof}
Since $L_{k,p}$ has a unique shortest path on $p+2$ vertices, every matrix in $L_{k,p}$ has at least $p+2$ distinct eigenvalues by \cite[Theorem~3.2]{MR3118943}. Together with Theorem~\ref{thm:blowup} the result follows.
\end{proof}

\begin{center}
 \begin{figure}[htb]
 \centering
\begin{subfigure}{0.45\textwidth}
\centering
\begin{tikzpicture}[scale=0.7]
    \foreach \angle in {0, 60,120} {
              \draw  (\angle:1)--(\angle+180:1);
     }
    \foreach \angle in {0, 60,120,180,240,300} {
             \draw  (\angle:1)--(\angle+60:1);
              \draw  (\angle:1)--(\angle+120:1);}
   \foreach \angle in {60,120,180,240,300} {
    \node[fill=white] at (\angle:1) {};
     }
  	\node[fill=white] (1) at (1,0) {};
    \node[fill=white] (2) at (2,0) {};
  	\node[fill=white] (3) at (3,0) {};
  	\node[fill=white] (4) at (4,0) {};
    \draw  (1)--(2) -- (3)--(4);
   \end{tikzpicture}
  \caption{$L_{6,3}$}
  \label{lollipop}
  \end{subfigure}
\begin{subfigure}{0.45\textwidth}
\centering
\begin{tikzpicture}[scale=0.7]
  \foreach \angle in {0, 60,120} {
              \draw  (\angle:1)--(\angle+180:1);
     }
    \foreach \angle in {0, 60,120,180,240,300} {
             \draw  (\angle:1)--(\angle+60:1);
              \draw  (\angle:1)--(\angle+120:1);}
   \foreach \angle in {60,120,180,240,300} {
    \node[fill=white] at (\angle:1) {};
     }
  	\node[fill=white] (1) at (1,0) {};
    \node[fill=white] (2) at (2,0) {};
  	\node[fill=white] (3) at (3,0) {};
  	\node[fill=white] (4) at (4,0) {};
  	\node (5) at (4.866,0.5) {};
  	\node (6) at (4.855,-0.5) {};
     \draw  (1)--(2) -- (3)--(4) -- (5) -- (6) -- (4);
    \end{tikzpicture}
  \caption{${B_{6,2,3}}$}
  \label{fig:barbell}
  \end{subfigure}
     \caption{The $(6,3)$-lollipop graph $L_{6,3}$ is $5$-duplication of a path $P_5$ at its leaf. The $(6,2,3)$-barbell graph $B_{6,2,3}$ is obtained as $5$-duplication and $2$-duplication of leaves of $P_6$.}\label{k-duplication}
     \end{figure}
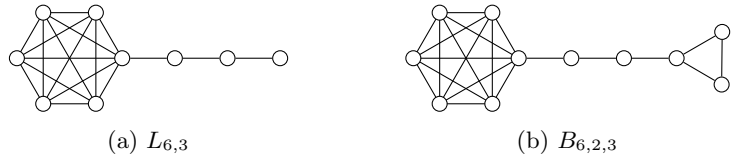
  \end{center}

Note that every matrix in $\calS(L_{k,p})$ has the the SSP \cite[Example~5.1]{MR4080669}. Together with Theorem~\ref{thm:sspsuper}  this leads to the next corollary.

\begin{corollary}
Let $G$ be a graph that contains a spanning subgraph isomorphic to $L_{k,p}$.  Then any spectrum with at least $p+2$ distinct elements is realizable by some matrix with the SSP in $\calS(G)$.
\end{corollary}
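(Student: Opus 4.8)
The plan is to chain together the three ingredients that have already been assembled in this section: the complete solution of the IEP-$G$ for lollipops (Corollary~\ref{cor:lollipop}), the fact that every matrix in $\calS(L_{k,p})$ automatically has the SSP, and the SSP supergraph result (Theorem~\ref{thm:sspsuper}).

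First I would fix a graph $G$ containing a spanning subgraph $L$ isomorphic to $L_{k,p}$; in particular $|V(G)|=k+p$, so every spectrum realizable in $\calS(G)$ is a multiset of $k+p$ real numbers. Let $\Lambda$ be such a multiset having at least $p+2$ distinct elements. Applying Corollary~\ref{cor:lollipop} to $L$ yields a matrix $A\in\calS(L)$ with $\spec(A)=\Lambda$.

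Next I would invoke \cite[Example~5.1]{MR4080669}, which says that every matrix in $\calS(L_{k,p})$ has the SSP; hence $A$ has the SSP. Finally, since $L$ is a spanning subgraph of $G$, Theorem~\ref{thm:sspsuper} (applied with $\epsilon$ arbitrary, say $\epsilon=1$) produces a matrix $A'\in\calS(G)$ with the SSP and $\spec(A')=\spec(A)=\Lambda$, which is exactly the assertion.

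There is essentially no genuine obstacle here — the statement is a direct corollary of the machinery already in place — and the only point worth flagging is a matter of interpretation rather than difficulty: one should read ``spectrum with at least $p+2$ distinct elements'' as a multiset of size $|V(G)|=k+p$, so that Corollary~\ref{cor:lollipop} applies verbatim. The threshold $p+2$ is forced because $L_{k,p}$ is a blowup of $P_{p+2}$ and its unique shortest path has $p+2$ vertices, so by \cite[Theorem~3.2]{MR3118943} every matrix in $\calS(L_{k,p})$ (and hence, via the construction, also the resulting $A'$ when one only cares about $\calS(L_{k,p})$) must have at least $p+2$ distinct eigenvalues; this is why the bound reappears here, although in this corollary it enters only as a hypothesis on $\Lambda$.
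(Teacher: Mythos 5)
Your proposal is correct and is exactly the argument the paper intends: realize the spectrum in $\calS(L_{k,p})$ via Corollary~\ref{cor:lollipop}, note the matrix automatically has the SSP by \cite[Example~5.1]{MR4080669}, and lift to the spanning supergraph $G$ with Theorem~\ref{thm:sspsuper}. The paper states this chain as a one-line remark rather than a formal proof, so there is nothing to add.
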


Similar arguments apply  for a generalized barbell graphs $B_{k',p,k''}=K_{k'} \oplus_v P_{p+2} \oplus_w K_{k''}$, where $v$ and $w$ are the leaves of $P_{p+2}$.
Note that $B_{k',p,k''}$  is a blowup of $P_{p+4}$ by $(k'-1)$-duplication and $(k''-1)$-duplication. See example on Figure~\ref{fig:barbell}.

\begin{corollary}\label{cor:barbell}
 Let  $\sigma$ be a multiset with $k'+p+{k''}$ elements, where $k' \geq 2$ and $k'' \geq 2$.  Then $\sigma$ is a spectrum of a matrix in $\calS(B_{k',p,k''})$ if and only if $\sigma$ contains at least $p+4$ distinct elements.
 \end{corollary}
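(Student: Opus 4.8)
The plan is to mirror the proof of Corollary~\ref{cor:lollipop} almost verbatim, using the two directions established elsewhere in the excerpt. Recall that $B_{k',p,k''}=K_{k'}\oplus_v P_{p+2}\oplus_w K_{k''}$ contains, as an induced subgraph, the path on $p+4$ vertices obtained by taking one vertex of $K_{k'}$ other than $v$ (if $k'\ge 2$, such a vertex exists and is adjacent only to $v$ within that end of the path), then $v$, then the $p$ internal vertices of $P_{p+2}$, then $w$, then one vertex of $K_{k''}$ other than $w$. More to the point, the relevant structural fact here is that $B_{k',p,k''}$ has a \emph{unique shortest path} between the two ``tip'' cliques passing through all of $v$, the internal path vertices, and $w$, so that the longest induced path has $p+4$ vertices; hence \cite[Theorem~3.2]{MR3118943} applies.

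For the forward (necessity) direction: I would invoke \cite[Theorem~3.2]{MR3118943} exactly as in the proof of Corollary~\ref{cor:lollipop}. That theorem guarantees that if a graph has a unique shortest path on $t$ vertices between two specified vertices then every matrix in $\calS(G)$ has at least $t$ distinct eigenvalues. For $B_{k',p,k''}$ with $k',k''\ge 2$, choose the two vertices to be a non-cut vertex of $K_{k'}$ and a non-cut vertex of $K_{k''}$; the unique shortest path between them visits $v$, the $p$ interior vertices of $P_{p+2}$, and $w$, for a total of $p+4$ vertices. Therefore any $\sigma$ realizable in $\calS(B_{k',p,k''})$ must contain at least $p+4$ distinct elements. (One should double-check the ``unique shortest path'' hypothesis: any path from the $K_{k'}$ tip to the $K_{k''}$ tip must pass through $v$ and $w$, which are cut vertices, and between $v$ and $w$ the only path in $P_{p+2}$ is the path itself; the tip-to-$v$ and $w$-to-tip steps are forced to be single edges since all other vertices of the end cliques are also adjacent to each other, but going through them is no shorter. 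This is the one place where a careful sentence is warranted.)

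For the reverse (sufficiency) direction: as noted in the paragraph preceding the statement, $B_{k',p,k''}$ is a blowup of $P_{p+4}$ obtained by $(k'-1)$-duplication of one leaf of $P_{p+4}$ and $(k''-1)$-duplication of the other leaf. Here $n=p+4$ and $m=k'+p+k''$, so $m-n=(k'-1)+(k''-1)=k'+k''-2\ge 0$. Given any multiset $\sigma$ with $m$ elements containing at least $p+4=n$ distinct values, split off $n$ of them to form a set of $n$ distinct reals, call it $\sigma_0$, and let $\sigma'$ be the remaining multiset of $m-n$ reals. Since $P_{p+4}$ is connected with $p+4\ge 2$ vertices, Theorem~\ref{thm:blowup} (applied with $G=P_{p+4}$, $H=B_{k',p,k''}$, the distinguished $n$-element set $\sigma_0$, and the leftover multiset $\sigma'$) yields a matrix in $\calS(B_{k',p,k''})$ with spectrum $\sigma_0\cup\sigma'=\sigma$. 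This completes the argument.

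I do not anticipate any serious obstacle: the sufficiency is an immediate instance of Theorem~\ref{thm:blowup}, and the necessity is a direct citation. The only genuinely substantive point — and the one I would spell out most carefully — is verifying that $B_{k',p,k''}$ really does have a unique shortest (equivalently, a unique induced) path on $p+4$ vertices so that \cite[Theorem~3.2]{MR3118943} can be invoked; this relies on $k'\ge 2$ and $k''\ge 2$ so that each end clique genuinely contributes exactly one extra path vertex, and on $v,w$ being cut vertices forcing every long path to route through the central $P_{p+2}$. Everything else is bookkeeping parallel to the lollipop case.
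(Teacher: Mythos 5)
Your proof is correct and takes essentially the same route as the paper, which disposes of this corollary by noting that "similar arguments apply" to the lollipop case: necessity from the unique shortest path on $p+4$ vertices via \cite[Theorem~3.2]{MR3118943}, and sufficiency from Theorem~\ref{thm:blowup} applied to $B_{k',p,k''}$ viewed as a blowup of $P_{p+4}$. Your extra care in verifying the unique-shortest-path hypothesis is a welcome addition, but nothing in your argument diverges from the paper's.
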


\begin{example}
In \cite[Appendix~B]{2017arxiv170802438} the authors investigate spectral arbitrariness of graphs on at most six vertices, where some of the achieved multiplicities remain unsolved.

Firstly, Corollary \ref{cor:barbell} implies that for a generalized barbell graph $G_{130}=B_{3,0,3}$ any multiplicity list with at least 4 elements is spectrally arbitrary. In particular, ordered multiplicity lists $(1,1,3,1)$ and $(1,3,1,1)$ are spectrally arbitrary.

Moreover,  graphs $G_{117}$ and $G_{150}$ can be obtained as blowups of $K_{1,3}$ and $P_{4}$, respectively, see Figure \ref{fig:117150}. Theorem \ref{thm:blowup} implies that (unordered) multiplicity list $\{3,1,1,1\}$ is spectrally arbitrary for $G_{117}$ and $G_{150}$. Again, in particular, ordered multiplicity lists $(1,1,3,1)$ and $(1,3,1,1)$ are spectrally arbitrary for $G_{117}$ and $G_{150}$.

\begin{center}
 \begin{figure}[htb]
 \centering
\begin{subfigure}{0.45\textwidth}
\begin{tikzpicture}[scale=0.7]
\begin{scope}[shift={(-4,0)}]
    \node (0) at (0,0) {};
   \foreach \x in {90,234,306} 
     {  \draw  (0) -- (\x:1);
          }
        \node[fill=white] at (90:1) {};
        \node[fill=gray] at (234:1) {};
        \node[fill=gray] at (306:1) {};
    \end{scope}
     \node[rectangle, draw=none, right, align=left] at (-3,0) {$\longrightarrow$}; 
\node (0) at (0,0) {};
     \draw  (162:1) -- (234:1);
     \draw  (304:1) -- (18:1);
  	 \foreach \x in {90,162,234,306,18} 
     {  \draw  (0) -- (\x:1);
         \node[fill=white] at (\x:1) {};
          }
          \phantom{\node[fill=white] at (0,-2) {v};}
  \end{tikzpicture}
  \caption{$G_{117}$}
  \end{subfigure}
  \centering
\begin{subfigure}{0.45\textwidth}
\centering
\begin{tikzpicture}[scale=0.7]
   	\node[fill=gray] (1) at (-1,0) {};
    \node[fill=white] (2) at (-1,1) {};
  	\node[fill=gray] (3) at (-1,2) {};
  	\node[fill=white] (4) at (-1,3) {};
    \draw  (1)--(2) -- (3)--(4);
   	\node[fill=white] (11) at (2.3,0) {};
    \node[fill=white] (12) at (3.7,0) {};
  	\node[fill=white] (13) at (3,1) {};
  	\node[fill=white] (14) at (2.3,2) {};
  	\node[fill=white] (15) at (3.7,2) {};
  	\node[fill=white] (16) at (3,3) {};
    \draw  (13)--(11)--(12) -- (13)--(14)--(15)--(16)--(14);
  \draw  (13)--(15);
  \node[rectangle, draw=none, right, align=left] at (0.5,1.5) {$\longrightarrow$}; 
  \end{tikzpicture}
  \caption{$G_{150}$}
  \end{subfigure}
     \caption{Blowup of $K_{1,3}$ obtained by 2-duplication of each gray vertex results in graph $G_{117}$. Similarly, $G_{150}$ is a blowup of $P_4$, obtained by two $2$-duplications of gray vertices. We name the graphs following An Atlas of Graphs \cite{MR1692656}.}\label{fig:117150}
     \end{figure}
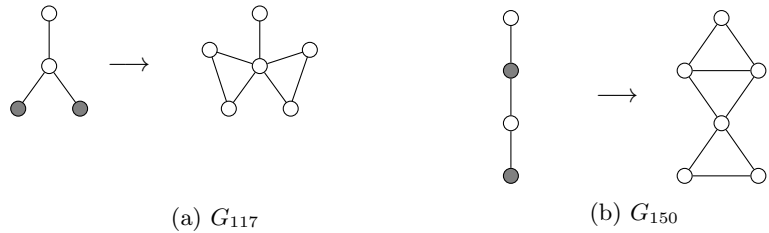
  \end{center}
\end{example}

\section{Strong spectral property with respect to a supergraph}
\label{sec:ssp}

In this section we extend the notion of the strong spectral property, that is built on the implicit function theorem for two transversally intersecting manifolds. Below we recall only a brief overview of the notions needed, and direct the reader to  \cite{MR3665573,MR2977424} for further details.

By definition, two manifolds intersect \emph{transversally} at a point $A$ if their normal spaces at that point have only trivial intersection. The implicit function theorem states that if two manifolds intersect at a point transversally, then this intersection will move continuously corresponding to any small perturbation of the two manifolds.  A version of the implicit function using the notion of a family of manifolds being smooth is given below.


\begin{theorem}
\label{thm:ift}
{\rm \cite[Theorem 3]{MR3665573}}
Let $\mathcal{M}_1(s)$ and $\mathcal{M}_2(t)$ be smooth families of manifolds in $\mathbb{R}^d$, where $s,t\in (-1,1)$, and $\mathcal{M}_1(0)$ and $\mathcal{M}_2(0)$ intersect transversally at $\by_0$. Then there exists a neighborhood $W\subseteq\mathbb{R}^2$ of the origin and a continuous function $f : W \rightarrow \mathbb{R}^d$ such that for each $\epsilon = (\epsilon_1, \epsilon_2) \in W$,
$\mathcal{M}_1(\epsilon_1)$ and $\mathcal{M}_2(\epsilon_2)$ intersect transversally at $f(\epsilon)$.
\end{theorem}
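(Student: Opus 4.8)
The plan is to reduce the statement to the classical implicit function theorem, using the definition of a \emph{smooth family of manifolds} to produce local defining equations that depend smoothly on the parameter. First I would fix, near $\by_0$, smooth maps $F_1\colon U\times(-\delta,\delta)\to\R^{c_1}$ and $F_2\colon U\times(-\delta,\delta)\to\R^{c_2}$, where $U\subseteq\R^d$ is a neighborhood of $\by_0$ and $c_i$ is the (locally constant) codimension of $\mathcal{M}_i$, such that for each fixed value of the parameter the corresponding map is a submersion on $U$ whose zero set is $\mathcal{M}_1(s)\cap U$, respectively $\mathcal{M}_2(t)\cap U$, and such that $F_1(\by_0,0)=\bzero=F_2(\by_0,0)$. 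For a submersion, the normal space of its zero set at a point is the row space of its Jacobian there; hence the transversality hypothesis $N_1\cap N_2=\{\bzero\}$ at $\by_0$ is precisely the statement that the $(c_1+c_2)\times d$ matrix obtained by stacking $D_yF_1(\by_0,0)$ over $D_yF_2(\by_0,0)$ has full row rank $c_1+c_2$ (so in particular $c_1+c_2\le d$).

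Next I would combine the two maps into $G\colon U\times(-\delta,\delta)^2\to\R^{c_1+c_2}$, $G(y,\epsilon_1,\epsilon_2)=\bigl(F_1(y,\epsilon_1),F_2(y,\epsilon_2)\bigr)$. Then $G$ is smooth, $G(\by_0,0,0)=\bzero$, and $D_yG(\by_0,0,0)$ has full row rank by the previous step. Permuting the coordinates of $\R^d$ so that the first $c_1+c_2$ columns of $D_yG(\by_0,0,0)$ are independent and writing $y=(y',y'')$ with $y'\in\R^{c_1+c_2}$, the implicit function theorem in the variable $y'$ gives, after shrinking neighborhoods, a smooth map $g$ near the base point with $g(y_0'',0,0)=y_0'$ and $G\bigl((g(y'',\epsilon),y''),\epsilon\bigr)=\bzero$. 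Since the intersection is in general positive dimensional while we want a single point, I would freeze the remaining coordinates and set $f(\epsilon)=\bigl(g(y_0'',\epsilon),y_0''\bigr)$; this $f$ is smooth (in particular continuous) on a neighborhood $W$ of the origin, $f(0,0)=\by_0$, and $G(f(\epsilon),\epsilon)=\bzero$, i.e.\ $f(\epsilon)\in\mathcal{M}_1(\epsilon_1)\cap\mathcal{M}_2(\epsilon_2)$.

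It then remains to see that $\mathcal{M}_1(\epsilon_1)$ and $\mathcal{M}_2(\epsilon_2)$ meet transversally at $f(\epsilon)$, and this is the only place where something beyond bookkeeping is needed, namely the openness of transversality. The map $(y,\epsilon)\mapsto\rank\left(\begin{smallmatrix}D_yF_1(y,\epsilon_1)\\ D_yF_2(y,\epsilon_2)\end{smallmatrix}\right)$ is lower semicontinuous and bounded above by $c_1+c_2$; it attains the value $c_1+c_2$ at $(\by_0,0,0)$, hence it equals $c_1+c_2$ throughout a neighborhood, and after shrinking $W$ it equals $c_1+c_2$ at every $(f(\epsilon),\epsilon)$. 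Consequently, for each $\epsilon\in W$ both $F_i(\cdot,\epsilon_i)$ are submersions at $f(\epsilon)$, so $\mathcal{M}_1(\epsilon_1)$ and $\mathcal{M}_2(\epsilon_2)$ really are manifolds of codimensions $c_1,c_2$ there, and the row spaces of their Jacobians at $f(\epsilon)$ intersect trivially, which is exactly transversality at $f(\epsilon)$.

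The step I expect to be the genuine crux is the first one: passing from the definition of a smooth family of manifolds to jointly smooth local defining submersions of constant codimension. If the family is instead given through local parametrizations $\phi(\cdot,s)$ depending smoothly on $s$, one recovers defining equations by complementing the image of $\phi$ with a smooth local submersion, so the reduction goes through with essentially no change; everything downstream is a routine application of the implicit function theorem together with semicontinuity of matrix rank.
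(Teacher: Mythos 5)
This theorem is quoted by the paper from \cite[Theorem 3]{MR3665573} without proof, so there is no in-paper argument to compare against; measured against the proof in that source, your argument is correct and follows essentially the same route, namely a reduction to the classical implicit function theorem plus lower semicontinuity of rank to preserve transversality (the cited proof works with smoothly varying local parametrizations rather than defining submersions, but as you note these are interchangeable). The one point to be careful about --- and you correctly flag it as the crux --- is that the whole reduction rests on the precise definition of a smooth family of manifolds adopted in \cite{MR3665573}, which must be strong enough to yield jointly smooth local defining equations of constant codimension; granting that, the rest is sound.
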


One family of manifolds that we are interested in is $\mptncl(G)$, where $G$ is a graph on $n$ vertices. Note that $\mptncl(G)$ is a subspace of $S_n(\R)$, so the tangent space to $\mptncl(G)$ at any of its points is equal to $\mptncl(G)$.  Then the normal space of $\mptncl(G)$ at $A\in\mptncl(G)$ is 
\[\mathcal{N}_{\mptncl(G),A} = \{X: X\in\mptncl(\overline{G}), I\circ X=O\},\]
and it does not depend on $A\in\mptncl(G)$.

The second family of manifolds that is relevant to our discussion is
\[\mei_\Lambda=\{B\in\SnR:\spec(B)=\Lambda\},\]
where $\Lambda$ is a multiset with $n$ real numbers.
If $A$ is an $n\times n$ matrix, then let $\mei_A=\mei_{\spec(A)}$.  By \cite[Lemma~7]{MR3665573}, the normal space of $\mei_A$ at $A$ is 
\[\mathcal{N}_{\mei_A,A}  = \{X\in\SnR: [A,X]=O\}.\]

From the discussion, a matrix $A$ in $\calS(G)$ has the SSP is equivalent to $\mei_A$ and $\mptncl(G)$ intersecting transversally at $A$.  Here we generalize this definition to a more flexible version of the SSP. 

\begin{definition}
Let $G$ be a graph and $A\in\calS(G)$.  Suppose $H$ is a supergraph of $G$ with $V(H)=V(G)$.  Then $A$ has the \emph{strong spectral property with respect to $H$} if $\mei_A$ and $\mptncl(H)$ intersect transversally at $A$.  Equivalently, $A$ has the SSP with respect to $H$ if the zero matrix $X=O$ is the only symmetric matrix that satisfies $X\in\mptncl(\overline{H})$, $I\circ X=O$, and $[A,X]=O$.
\end{definition}

By definition, a matrix $A\in\calS(G)$ has the SSP (in the classical sense) if and only if $A$ has the SSP with respect to $G$.  Also, if $H'$ is a supergraph of $H$ of the same order, then $A$ has the SSP with respect to $H$ implies $A$ has the SSP with respect to $H'$. 
 
\begin{example}
 For $n \geq 4$, let us observe the matrix 
 $$A=\npmatrix{0 & {\bf 1}\trans\\
 {\bf 1}& O_{n,n}}\in \calS(K_{1,n}),$$
 where ${\bf 1}\in \R^n$ denotes a vector with all entries equal to $1$.
That $A$ does not have the SSP, can be shown by choosing $X_1$ to be any $n \times n$ nonzero symmetric  matrix with zero diagonal and row sums equal to 0, for example, 
 $$
 X_1=\npmatrix{
 O_{2,2}&O_{2,n-2}&X_0\\
 O_{n-2,2}&O_{n-2,n-2}&O_{n-2,2}\\
 X_0&O_{2,n-2}&O_{2,2} 
 } \text{ where }X_0=\npmatrix{1 & -1 \\ -1 & 1},$$ 
letting $$X=\npmatrix{0&\bzero_n\trans\\\bzero_n&X_1}\in \mptncl(\overline{K_{1,n}}),$$ and observing that $I\circ X=O$, and $[A,X]=O$.
 
 Define $H$ to be a supergraph of $K_{1,n}$ obtained by removing the edges $\{2,3\}$, $\{3,4\}$, \ldots, $\{n,n+1\}$ from the complete graph $K_{n+1}$.
 Take any $Y \in \mptncl(\overline{H})$ with $I\circ Y=O$, and $[A,Y]=O$.
 Then $$Y=\npmatrix{0 & \bzero_n\trans\\
 \bzero_n& Y_1},$$ where $Y_1 \in \calS(P_n)$ with $I\circ Y_1 = O$ and $Y_1 \bone=\bzero$. This implies $Y_1=O $ and hence $A$ has the SSP with respect to $H$.
\end{example}
 
The generalized SSP leads to a generalized version of Theorem~\ref{thm:sspsuper}.

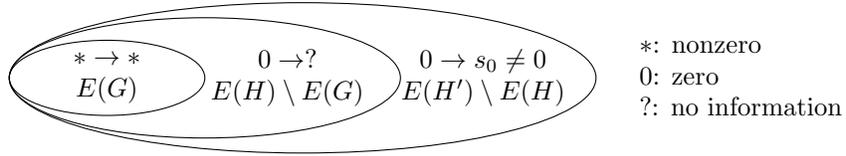
\begin{figure}[h]
\begin{center}
\begin{tikzpicture}
\node[rectangle, draw=none, align=center] at (0,0) {$*\rightarrow *$\\$E(G)$};
\node[rectangle, draw=none, align=center] at (2.4,0) {$0\rightarrow ?$\\$E(H)\setminus E(G)$};
\node[rectangle, draw=none, align=center] at (5.0,0) {$0\rightarrow s_0\neq 0$\\$E(H')\setminus E(H)$};
\draw (0,0) ellipse (1.3cm and 0.5cm);
\draw (1.3,0) ellipse (2.6cm and 0.8cm);
\draw (2.6,0) ellipse (3.9cm and 1cm);
\node[rectangle, draw=none, right, align=left] at (7,0) {
$*$: nonzero\\
$0$: zero\\
$?$: no information}; 
\end{tikzpicture}
\end{center}
\caption{Illustration of each entry of the matrix perturbed by Theorem~\ref{thm:perturb}.}
\label{fig:thmperturb}
\end{figure}

\begin{theorem}
\label{thm:perturb}
Let $G,H,H'$ be the graphs such that $V(G)=V(H)=V(H')$ 
and $E(G) \subseteq E(H) \subseteq E(H')$. Furthermore, let $A \in \calS(G)$ be a matrix that has the SSP with respect $H$. Then for any $\epsilon>0$, there is a matrix $A'\in\mptncl(H')$ such that $\spec(A')=\spec(A)$, $\|A-A'\|<\epsilon$, $A'$ has the SSP with respect to $H'$, and every entry of $A'$ that corresponds to an edge in $E(H')\setminus E(H)$ is nonzero.
\end{theorem}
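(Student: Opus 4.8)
The plan is to run the implicit function theorem for transversal intersections (Theorem~\ref{thm:ift}) along a one-parameter family of affine subspaces that slides $\mptncl(H)$ in a direction which switches on exactly the entries indexed by $E(H')\setminus E(H)$, all the while keeping the spectrum fixed. This mirrors the proof of Theorem~\ref{thm:sspsuper}, with the added twist that the sliding direction is chosen to force the new entries to be nonzero.

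First I would fix the symmetric matrix $W\in\SnR$ with $W_{ij}=1$ exactly when $\{i,j\}\in E(H')\setminus E(H)$ and $W_{ij}=0$ otherwise, and set up the smooth families $\mathcal{M}_1(s)=\mptncl(H)+sW$ and $\mathcal{M}_2(t)=\mei_A$ (constant in $t$). Since $\mptncl(H)$ is a linear subspace, each $\mathcal{M}_1(s)$ is an affine subspace, the family $s\mapsto\mptncl(H)+sW$ is smooth, and the normal space of $\mathcal{M}_1(s)$ at each of its points is $\mathcal{N}_{\mptncl(H)}=\{X\in\mptncl(\overline H):I\circ X=O\}$, independent of $s$; the family $\mathcal{M}_2$ is constantly the isospectral manifold through $A$, with normal space $\{X\in\SnR:[A,X]=O\}$ at $A$. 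Note $A\in\calS(G)\subseteq\mptncl(H)$ and $A\in\mei_A$, so $A$ lies in $\mathcal{M}_1(0)\cap\mathcal{M}_2(0)$, and the hypothesis that $A$ has the SSP with respect to $H$ is by definition exactly the statement that $\mathcal{M}_1(0)=\mptncl(H)$ and $\mathcal{M}_2(0)=\mei_A$ intersect transversally at $A$ — precisely the input needed to start Theorem~\ref{thm:ift}.

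Next I would invoke Theorem~\ref{thm:ift} with $\by_0=A$, obtaining a neighborhood of the origin in $\R^2$ and a continuous selection $f$, equal to $A$ at the origin, with $\mathcal{M}_1(\epsilon_1)$ and $\mathcal{M}_2(\epsilon_2)$ intersecting transversally at $f(\epsilon_1,\epsilon_2)$. For the given $\epsilon>0$ I would pick a small nonzero $s$ in the allowed range with $\|f(s,0)-A\|<\epsilon$ and set $A'=f(s,0)$. The conclusions then come by unpacking membership in each family: $A'\in\mathcal{M}_2(0)=\mei_A$ gives $\spec(A')=\spec(A)$; writing $A'=Z+sW$ with $Z\in\mptncl(H)$ shows $A'\in\mptncl(H')$ (as $W$ is supported off-diagonal on $E(H')$ and $\mptncl(H)\subseteq\mptncl(H')$), and since every entry of such a $Z$ indexed by $E(H')\setminus E(H)$ is zero, every such entry of $A'$ equals $s\neq 0$; finally, transversality of $\mei_{A'}=\mei_A$ and $\mathcal{M}_1(s)$ at $A'$ says the only symmetric $X$ with $[A',X]=O$, $X\in\mptncl(\overline H)$, $I\circ X=O$ is $X=O$, and since $\mptncl(\overline{H'})\subseteq\mptncl(\overline H)$ (because $H\subseteq H'$) the same holds with $\mptncl(\overline H)$ replaced by $\mptncl(\overline{H'})$, i.e., $A'$ has the SSP with respect to $H'$. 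Shrinking $\epsilon$ also keeps the $E(G)$-entries of $A'$ nonzero, matching Figure~\ref{fig:thmperturb}.

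I expect the only genuine point requiring care to be the verification that $\mathcal{M}_1$ and $\mathcal{M}_2$ qualify as smooth families of manifolds in the sense demanded by Theorem~\ref{thm:ift} — immediate for the affine family $\mathcal{M}_1$, and standard for the isospectral manifold $\mei_A$ (cf.~\cite{MR3665573,MR2977424}) — together with the normal-space bookkeeping. The conceptual heart is simply that $\mathcal{M}_1(s)$ is merely a translate of $\mptncl(H)$ and hence carries the same normal space for every $s$; that is what lets the transversality handed to us by the SSP of $A$ with respect to $H$ be read off at the perturbed point $A'$ as the SSP with respect to $H'$.
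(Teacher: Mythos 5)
Your proposal is correct and follows essentially the same route as the paper: your affine family $\mathcal{M}_1(s)=\mptncl(H)+sW$ is exactly the paper's family $\mathcal{M}(s)=\{B\in\mptncl(H'):b_{ij}=s\text{ for }(i,j)\in E(H')\setminus E(H)\}$, and the application of Theorem~\ref{thm:ift} and the unpacking of the conclusions (spectrum, nonzero new entries equal to $s$, and SSP with respect to $H'$ via $\mptncl(\overline{H'})\subseteq\mptncl(\overline{H})$) match the paper's argument.
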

\begin{proof}
Define a smooth family of manifolds 
\[\mathcal{M}(s)=\{B=\begin{pmatrix} b_{ij}\end{pmatrix}\in\mptncl(H'):b_{ij}=s \text{ if } (i,j)\in E(H')\setminus E(H)\}.\] 
By definition, $\mathcal{M}(0)=\mptncl(H)$.  Since $A$ has the SSP with respect to $H$, the two manifolds $\mei_A$ and $\mathcal{M}(0)$ intersect transversally at $A$.  According to Theorem~\ref{thm:ift}, $\mei_A$ and $\mathcal{M}(s)$ intersect transversally at $f(s)$ for any small $s$, and $f(s)$ is a continuous function of $s$ with $f(0)=A$. Since $\mathcal{M}(s)\subset\mptncl(H')$, the manifolds $\mei_A$  and $\mptncl(H')$ also intersect transversally at $f(s)$.

Thus, $A':=f(s_0)$ can be chosen arbitrarily close to $A$ with $s_0\neq 0$.  (See Figure~\ref{fig:thmperturb}.) Since $A'\in\mei_A$, $\spec(A')=\spec(A)$, and since $\mptncl(H')$ and $\mei_A$ intersect transversally at $A'$, $A'$ has the SSP with respect to $H'$.  Finally, the entries of $A'$ corresponding to edges in $E(H')\setminus E(H)$ are nonzero since $A'\in\mathcal{M}(s_0)$ and $s_0\neq 0$.
\end{proof}



The following proposition generalizes \cite[Theorem 34]{MR3665573}.
 
\begin{proposition}
\label{prop:dunion}
Let $A$ and $B$ be symmetric matrices with the SSP with respect to $H_A$ and $H_B$, respectively.  If $A$ and $B$ have no common eigenvalues, then $A\oplus B$ has the SSP with respect to $H_A\dunion H_B$.
\end{proposition}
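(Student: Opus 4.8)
The plan is to unwind the definition of the SSP with respect to a supergraph and reduce it to a block-matrix computation. Suppose $X$ is a symmetric matrix witnessing the failure of the SSP of $A\oplus B$ with respect to $H_A\dunion H_B$; that is, $X\in\mptncl(\overline{H_A\dunion H_B})$, $I\circ X=O$, and $[A\oplus B, X]=O$. Write $X$ in block form conformally with $A\oplus B$ as
\[
X=\npmatrix{X_{11} & X_{12} \\ X_{12}\trans & X_{22}}.
\]
The first step is to observe that $\overline{H_A\dunion H_B}$ contains all edges between the vertex set of $H_A$ and that of $H_B$ (since $H_A\dunion H_B$ is a disjoint union, it has no such edges), so the pattern condition $X\in\mptncl(\overline{H_A\dunion H_B})$ places no constraint on $X_{12}$, while it forces $X_{11}\in\mptncl(\overline{H_A})$ and $X_{22}\in\mptncl(\overline{H_B})$. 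Likewise $I\circ X=O$ gives $I\circ X_{11}=O$ and $I\circ X_{22}=O$.

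Next I would expand the commutator $[A\oplus B, X]=O$ blockwise. The diagonal blocks give $[A,X_{11}]=O$ and $[B,X_{22}]=O$, while the off-diagonal block gives $AX_{12}-X_{12}B=O$, i.e. $AX_{12}=X_{12}B$. The key step is then to use the hypothesis that $A$ and $B$ have no common eigenvalue: a standard Sylvester-equation argument (comparing the action of $A$ and $B$ on the columns/rows of $X_{12}$, or invoking that the Sylvester operator $Y\mapsto AY-YB$ is invertible precisely when $\spec(A)\cap\spec(B)=\emptyset$) shows $X_{12}=O$. Once $X_{12}=O$, the conditions on $X_{11}$ say exactly that $X_{11}$ witnesses a failure of the SSP of $A$ with respect to $H_A$, so by hypothesis $X_{11}=O$; symmetrically $X_{22}=O$. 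Hence $X=O$, which is what we needed.

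I do not expect any serious obstacle here; the only point requiring a little care is verifying that the pattern/diagonal conditions for $\mptncl(\overline{H_A\dunion H_B})$ genuinely decouple into the corresponding conditions for $\overline{H_A}$ and $\overline{H_B}$ with no constraint coupling the two blocks — this is where the disjointness of the union (as opposed to, say, a vertex-sum) is essential, and it is exactly the generalization of the argument behind \cite[Theorem~34]{MR3665573}. The Sylvester step is classical, but I would state it explicitly since it is the place where the disjoint-spectrum hypothesis is consumed. Everything else is bookkeeping with block matrices.
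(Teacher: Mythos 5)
Your proposal is correct and follows essentially the same route as the paper's proof: block-decompose $X$, observe that the pattern and diagonal conditions decouple into the conditions for $\overline{H_A}$ and $\overline{H_B}$ on the diagonal blocks, kill the off-diagonal block via the Sylvester equation $AX_{12}=X_{12}B$ using the disjoint spectra, and then invoke the SSP of $A$ and $B$ with respect to $H_A$ and $H_B$ to conclude $X_{11}=X_{22}=O$. No gaps.
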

\begin{proof}
Let 
\[X = \begin{bmatrix}X_{11}&X_{21}\trans\\ X_{21}&X_{22}\end{bmatrix}\]
be a matrix in $\mptncl(\overline{H_A\dunion H_B})$ such that $X\circ I=O$ and $[X, A\oplus B]=O$.  Equivalently, we have 
\[\begin{array}{cc}
X_{11}\circ I = O, & X_{22}\circ I = O, \\{}
[X_{11}, A] = O, & [X_{22}, B] = O, \\
\end{array}\]
and $BX_{21} = X_{21}A $.  Since $A$ and $B$ have no common eigenvalues, the last equality implies $X_{21} = O$ (see e.g.~\cite[Theorem~4.4.6]{MR1288752}). Since $A$ and $B$ have the SSP with respect to $H_A$ and $H_B$, respectively, both $X_{11}$ and $X_{22}$ have to be zero matrices.  Consequently, $X=O$ and $A\oplus B$ has the SSP with respect to $H_A\dunion H_B$.
\end{proof}

 Extending a graph by adding a vertex has been considered for example in \cite[Theorem~36]{MR3665573}, \cite[Theorem~6.13]{MR4074182}. Here we extend those results by adding a clique instead of a vertex to a graph, while adding an eigenvalue of multiplicity higher than $1$ to the spectrum.



\begin{theorem}\label{thm:addeig}
Let $G$ be a graph, $A\in\calS(G)$ with the SSP, $v \in V(G)$, and $\lambda\in\mathbb{R}$ with $\lambda\not\in \spec(A)\cup \spec(A(v))$.  Then for any given positive integer $s$, there exists a matrix $A'\in\calS(G\oplus_v K_{s+1})$ having the SSP such that $$\spec(A')=\spec(A)\cup\{\lambda^{(s)}\}.$$  Moreover, $A'$ can be chosen to be arbitrarily close to $A\oplus\lambda I_s$ while the $(i,i)$-entry of $A'$ for any $i\in V(K_{s+1})\setminus\{v\}$ is different from $\lambda$.
\end{theorem}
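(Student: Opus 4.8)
\medskip
\noindent\emph{Proof plan.}
The plan is to reduce the theorem to the case $s=1$ (appending a single leaf at $v$) and then to ``inflate'' that leaf into a clique by Lemma~\ref{HS04}. The SSP will not be carried through the construction; instead it is recovered at the end from the fact that $A\oplus\lambda I_s$ already intersects $\mptncl(G\oplus_v K_{s+1})$ transversally, and that this persists under small isospectral perturbations. The delicate point will be the leaf case, specifically arranging that the new leaf carries a diagonal entry different from $\lambda$; this is exactly where $\lambda\notin\spec(A(v))$ is used.

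\medskip
\noindent\emph{Step 1: the leaf case.}
I would first produce $\hat A\in\calS(G\oplus_v K_2)$ — with $x_1$ the new leaf adjacent to $v$ — such that $\spec(\hat A)=\spec(A)\cup\{\lambda\}$, $\hat A$ is arbitrarily close to $A\oplus[\lambda]$, and $(\hat A)_{x_1x_1}\ne\lambda$. Since $\spec(A)\cap\{\lambda\}=\varnothing$, Proposition~\ref{prop:dunion} gives that $A\oplus[\lambda]$ has the SSP (with respect to its own graph), so $\mei_{A\oplus[\lambda]}$ and $\mptncl(G\oplus_v K_2)$ meet transversally at $A\oplus[\lambda]$; their intersection $\mc N$ near $A\oplus[\lambda]$ is therefore a manifold containing $A\oplus[\lambda]$, and by Theorem~\ref{thm:perturb} it already contains matrices of $\calS(G\oplus_v K_2)$ arbitrarily close to $A\oplus[\lambda]$ with the same spectrum. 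Writing a point of $\mc N$ as $\left(\begin{smallmatrix}A''&\gamma e_v\\ \gamma e_v\trans&\eta\end{smallmatrix}\right)$, the sublocus $\{\gamma=0\}$ consists of block matrices $A''\oplus[\eta]$ with $\spec(A'')\cup\{\eta\}=\spec(A)\cup\{\lambda\}$, which — as $A''$ is close to $A$ and $\eta$ close to $\lambda$ — forces $\eta=\lambda$ and $\spec(A'')=\spec(A)$; thus $\{\gamma=0\}$ is a proper analytic subset of $\mc N$, hence with dense complement near $A\oplus[\lambda]$. For a point of $\mc N$ with $\gamma\ne0$, if $\eta=\lambda$ then, expanding the characteristic polynomial along the last row and column and evaluating at $t=\lambda$,
$0=\det(\hat A-\lambda I)=(\eta-\lambda)\det(A''-\lambda I)-\gamma^2\det(A''(v)-\lambda I)=-\gamma^2\det(A''(v)-\lambda I)$,
forcing $\lambda\in\spec(A''(v))$, which is impossible for $A''$ close to $A$ since $\lambda\notin\spec(A(v))$. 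Hence a point of $\mc N$ close to $A\oplus[\lambda]$ with $\gamma\ne0$ automatically has $\eta\ne\lambda$, and taking $\hat A$ to be such a point finishes Step~1. (This already proves the theorem for $s=1$; alternatively one may quote the leaf‑appending results of \cite{MR4074182,MR3665573} together with this short addendum about the diagonal entry.)

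\medskip
\noindent\emph{Step 2: inflating the leaf.}
Assume $s\ge2$ and set $\eta:=(\hat A)_{x_1x_1}\ne\lambda$. By Lemma~\ref{complete} choose $D'\in\calS(K_s)$ with $\spec(D')=\{\eta,\lambda^{(s-1)}\}$ and a unit eigenvector $\bu$ for $\eta$ having no zero entry. Then $D'-\lambda I_s=(\eta-\lambda)\bu\bu\trans$, so $(D')_{ii}=\lambda+(\eta-\lambda)u_i^2\ne\lambda$ for every $i$, and $D'\to\lambda I_s$ as $\eta\to\lambda$. Now apply Lemma~\ref{HS04} with $\hat A$ in the role of the larger matrix (it carries $\eta$ on its diagonal at $x_1$) and $B=D'$; the output is $A'=\left(\begin{smallmatrix}\hat A(x_1)&\hat\bb\bu\trans\\ \bu\hat\bb\trans&D'\end{smallmatrix}\right)$, where $\hat\bb$ is the $x_1$‑column of $\hat A$ restricted to $V(G)$. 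As $x_1$ is a leaf at $v$, $\hat\bb$ is a nonzero multiple of $e_v$, so the coupling block $\hat\bb\bu\trans$ is nonzero precisely in the positions joining $v$ to the $s$ new vertices; together with $\hat A(x_1)\in\calS(G)$ and $D'\in\calS(K_s)$ this identifies the graph of $A'$ as $G\oplus_v K_{s+1}$, that is, $A'\in\calS(G\oplus_v K_{s+1})$. By Lemma~\ref{HS04}, $\spec(A')=\spec(\hat A)\cup(\spec(D')\setminus\{\eta\})=\spec(A)\cup\{\lambda^{(s)}\}$; the diagonal entries of $A'$ at the vertices of $K_{s+1}$ other than $v$ are those of $D'$, hence $\ne\lambda$; and $A'$ is as close to $A\oplus\lambda I_s$ as desired once $\hat A$ is chosen close enough to $A\oplus[\lambda]$ (the principal submatrix $\hat A(x_1)$ is then close to $A$ and $\hat\bb$ is small) and $D'$ close enough to $\lambda I_s$. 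For $s=1$ simply take $A'=\hat A$.

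\medskip
\noindent\emph{Step 3: the SSP, for free.}
Finally, $\lambda I_s$ has the SSP with respect to $K_s$ trivially (the only $X\in\mptncl(\overline{K_s})$ with zero diagonal is $O$), and $\spec(A)\cap\{\lambda\}=\varnothing$, so Proposition~\ref{prop:dunion} gives that $A\oplus\lambda I_s$ has the SSP with respect to the disjoint union of $G$ and $K_s$, hence with respect to its supergraph $G\oplus_v K_{s+1}$; equivalently $\mei_{A\oplus\lambda I_s}$ and $\mptncl(G\oplus_v K_{s+1})$ meet transversally at $A\oplus\lambda I_s$. Since $A'\in\calS(G\oplus_v K_{s+1})\subseteq\mptncl(G\oplus_v K_{s+1})$ and $\spec(A')=\spec(A\oplus\lambda I_s)$ (so $\mei_{A'}=\mei_{A\oplus\lambda I_s}$), and since transversality at intersection points is an open condition along this fixed isospectral manifold (the normal spaces vary continuously and $\mei_{A\oplus\lambda I_s}$ has constant dimension), $\mei_{A'}$ and $\mptncl(G\oplus_v K_{s+1})$ also meet transversally at $A'$ provided $A'$ is close enough to $A\oplus\lambda I_s$. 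Thus $A'$ has the SSP with respect to $G\oplus_v K_{s+1}$, which is its own graph; that is, $A'$ has the SSP. The only genuine work is Step~1, namely forcing $(\hat A)_{x_1x_1}\ne\lambda$; the graph and spectrum of $A'$ fall out of Lemma~\ref{HS04}, and the SSP is inherited from $A\oplus\lambda I_s$.
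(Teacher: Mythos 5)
Your proof is correct, but it takes a genuinely different route from the paper's. The paper appends the whole clique in one shot: it forms $A\oplus\lambda I_s$, notes via Proposition~\ref{prop:dunion} that this has the SSP with respect to $G\dunion K_s$, and applies Theorem~\ref{thm:perturb} with $H=G\dunion K_s$, $H'=G\oplus_v K_{s+1}$; the SSP of $A'$ then comes for free from Theorem~\ref{thm:perturb}, and the only remaining work is to verify that the entries of $A'$ inside the new clique are as required (off-diagonals nonzero, diagonals $\neq\lambda$), which the paper does by a rank argument on a bordered $(|G|+1)\times(|G|+1)$ submatrix of $A'-\lambda I$, using $\rank(A'-\lambda I)=|G|$ and the nonsingularity of $A'[W]-\lambda I$ coming from $\lambda\notin\spec(A(v))$. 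You instead append a single leaf by Theorem~\ref{thm:perturb}, force its diagonal entry away from $\lambda$ via the determinant identity $0=(\eta-\lambda)\det(A''-\lambda I)-\gamma^2\det(A''(v)-\lambda I)$ — which is really the $2\times 2$-bordered special case of the paper's rank argument, deployed one stage earlier — and then inflate the leaf into $K_s$ by the explicit Fiedler/\v{S}migoc bordering of Lemma~\ref{HS04} with $D'=\lambda I_s+(\eta-\lambda)\bu\bu\trans$, which makes the clique block and its diagonal completely explicit. The price of using Lemma~\ref{HS04} is that it does not transport the SSP, so you must recover it at the end from the transversality of $\mei_{A\oplus\lambda I_s}$ and $\mptncl(G\oplus_v K_{s+1})$ at $A\oplus\lambda I_s$ together with openness of transversality along the intersection; that openness claim is correct (the normal space of $\mei_\Lambda$ is the kernel of $X\mapsto[B,X]$, of constant rank on $\mei_\Lambda$, hence varies continuously, and $\dim(\mathcal{N}_1+\mathcal{N}_2)$ is lower semicontinuous), but it is the one ingredient not already packaged in the paper's toolkit and deserves a sentence of proof rather than a parenthesis. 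In exchange, your construction buys explicit control of the clique entries without the rank computation, and isolates the role of the hypothesis $\lambda\notin\spec(A(v))$ in a single clean determinant evaluation. (Minor remark: the ``proper analytic subset'' digression in Step~1 is unnecessary, since Theorem~\ref{thm:perturb} already hands you a point with $\gamma\neq 0$.)
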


\begin{proof}
Choose a matrix $A=(a_{ij}) \in \calS(G)$ with the SSP, $v \in V(G)$, and $\lambda \not\in \spec(A)\cup\spec(A(v))$, as assumed in the theorem. The matrix $\lambda I_s$ has the SSP with respect to $K_s$, and $A$ and $\lambda I_s$ have no common eigenvalues. By Proposition~\ref{prop:dunion} the matrix $\hat{A}=A\oplus\lambda I_{s} \in \calS(G \dunion s K_1)$ has the SSP with respect to $H=G\dunion K_s$.

Let $\alpha$ be the set of all edges of the form $\{v,w\}$, where $w \in V(K_{s})$, and let $H'$ be a supergraph of $H$, obtained from $H$ by adding all the edges from $\alpha$, i.e., $H'=G\oplus_v K_{s+1}$.  By Theorem~\ref{thm:perturb}, for any $\epsilon>0$, there exists a matrix $A'=(a'_{ij})\in\mptncl(G\oplus_v K_{s+1})$ with the SSP with respect to $H'$, such that
 $\spec(A') =\spec(\hat A)= \spec(A)\dunion\{\lambda^ {(s)}\}$, $\|\hat A - A'\|<\epsilon$,
and $a'_{ij} \ne 0$ for all $\{i,j\}\in\alpha$.
Since the perturbation $\hat A - A'$ can be made to be arbitrarily small, all nonzero entries of $\hat{A}$ remain nonzero in $A'$. 


Finally, we show that the $(i,j)$-entry of $A'-\lambda I$ is nonzero whenever $i,j\in V(K_s)$.  This argument will allow us to conclude that $a'_{ij}\neq 0$ (if $i\neq j$) and $a_{ij}\neq \lambda$ (if $i=j$) for any $i,j\in V(K_s)$.  Since $A(v)-\lambda I$ is nonsingular and the perturbation is small, $A'$ can be chosen so that $A'[W]-\lambda I$ is nonsingular for $W=V(G)\setminus\{v\}$.  Pick any $i,j\in V(K_s)$, and consider the $(|G|+1) \times (|G|+1)$ submatrix $B$ of $A'-\lambda I$ induced on rows $V(G)\cup\{i\}$ and columns $V(G)\cup\{j\}$. Such matrix $B$ has the form 
\[B=\begin{bmatrix}
A'[W] - \lambda I & {\bf a} & \bzero \\
{\bf a}\trans & a'_{vv}-\lambda & b \\
\bzero\trans &c & a'_{ij} - \delta_{ij}\lambda
\end{bmatrix},\]
where $b,c \in \R$ are nonzero,  ${\bf a}\in \R^{|G|-1}$ and  $\delta_{ij}$ stands for the Kronecker delta function, i.e. $\delta_{ij}=0$ if $i\neq j$ and $\delta_{ij}=1$ if $i=j$.  Since $\lambda$ is an eigenvalue of $A'$ of multiplicity $s$, it follows that $\rank(B) \leq \rank(A'-\lambda I)=|G|$.
If $a'_{ij} - \delta_{ij}\lambda = 0$, then 
\[|G| \geq \rank(B)= \rank(A'[W] - \lambda I) + 2 = |G| - 1+2=|G| +1,\]
a contradiction.  Therefore, $a'_{ij}- \delta_{ij}\lambda\neq 0$. As this argument applies to any $i,j\in V(K_s)$ we conclude $A'\in \calS(H')$.  
\end{proof}

\begin{center}
 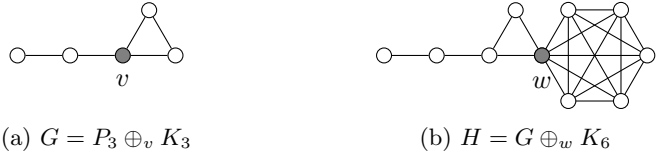
\begin{figure}[htb]
 \centering
 \begin{subfigure}{0.45\textwidth}
\centering
\begin{tikzpicture}[scale=0.7]
	\node (0) at (0,0) {};
  	\node[fill=gray, label={below:$v$}] (1) at (-1,0) {};
  	\node (2) at (-0.5,0.86) {};
  	\node (3) at (-2,0) {};
  	\node (4) at (-3,0) {};
             \draw  (0) -- (1) -- (2) -- (0);
             \draw  (4) -- (3) -- (1);
    \phantom{\begin{scope}[shift={(-1,0)}]
    \foreach \angle in {0, 60,120} {
              \draw  (\angle:1)--(\angle+180:1);
     }
    \foreach \angle in {0, 60,120,180,240,300} {
             \draw  (\angle:1)--(\angle+60:1);
              \draw  (\angle:1)--(\angle+120:1);}
 \foreach \angle in {0, 60,120,240,300} {
    \node[fill=white] at (\angle:1) {};
     } \end{scope}}
  \end{tikzpicture}
  \caption{$G=P_3 \oplus_v K_3$}
  \end{subfigure}
  \begin{subfigure}{0.45\textwidth}
\centering
\begin{tikzpicture}[scale=0.7]
	\node (0) at (0,0) {};
  	\node (1) at (-1,0) {};
  	\node (2) at (-0.5,0.86) {};
  	\node (3) at (-2,0) {};
  	\node (4) at (-3,0) {};
             \draw  (0) -- (1) -- (2) -- (0);
             \draw  (4) -- (3) -- (1);
  \begin{scope}[shift={(1,0)}]
    \foreach \angle in {0, 60,120} {
              \draw  (\angle:1)--(\angle+180:1);
     }
    \foreach \angle in {0, 60,120,180,240,300} {
             \draw  (\angle:1)--(\angle+60:1);
              \draw  (\angle:1)--(\angle+120:1);}
 \foreach \angle in {0, 60,120,240,300} {
    \node[fill=white] at (\angle:1) {};
     } \end{scope}
         \node[fill=gray, label={below:$w$}] at (0,0) {};
  \end{tikzpicture}
  \caption{$H=G \oplus_w K_6$}
  \end{subfigure}
     \caption{Two examples of graphs obtained by appending a clique.}\label{fig:clique appending}
     \end{figure}
  \end{center}

\begin{remark}\label{rem:submatrix}
In what follows we will want to apply Theorem \ref{thm:addeig} inductively. In this process we will keep track of the eigenvalues of $A$, but not of the eigenvalues of a submatrix $A(v)$. However, if we assume that $\mu \not\in \spec(A)\cup\spec(A(v))$ for any $v \in V(G)$ and $\mu \neq \lambda$, then we can guarantee that $\mu \not\in \spec(A')\cup\spec(A'(u))$ for any $u \in V(G \oplus_v K_{s+1})$, since $A'$ can be chosen to be arbitrarily close to $A \oplus \lambda I_s$.
\end{remark}

\begin{corollary}
\label{cor:appcliques}
Let $\lambda_1,\ldots,\lambda_h$ be distinct real numbers, $m_1,\ldots,m_h$  positive integers, and $G$ a graph. Let a sequence of graphs be defined as follows:
\begin{align*} 
H^{(0)}&=G, \\
H^{(k+1)}&=H^{(k)} \oplus_{v_{k+1}} K_{m_{k+1}+1} \text{ for some }v_{k+1} \in V(H^{(k)}). 
\end{align*}
Let $A\in\calS(G)$ be a matrix with the SSP such that $\lambda_i\notin\spec(A)\cup\,\spec(A(v))$ for all $v\in V(G)$ and $i \in \{1,\ldots,h\}$. Then there exists a matrix $\hat{A}\in\calS(H^{(h)})$ with the SSP and the spectrum 
\[\spec(\hat A)=\spec(A)\cup\{\lambda_1^{(m_1)},\ldots,\lambda_h^{(m_h)}\}.\]
\end{corollary}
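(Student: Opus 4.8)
The plan is to prove Corollary~\ref{cor:appcliques} by induction on $h$, the number of cliques appended, with Theorem~\ref{thm:addeig} providing the inductive step and Remark~\ref{rem:submatrix} carrying the necessary hypothesis forward. For the base case $h=0$ there is nothing to prove: $\hat A = A$ works. So suppose the statement holds for $h-1$; I would apply the inductive hypothesis to the distinct reals $\lambda_1,\ldots,\lambda_{h-1}$, the multiplicities $m_1,\ldots,m_{h-1}$, and the sequence $H^{(0)},\ldots,H^{(h-1)}$, obtaining a matrix $B\in\calS(H^{(h-1)})$ with the SSP and $\spec(B)=\spec(A)\cup\{\lambda_1^{(m_1)},\ldots,\lambda_{h-1}^{(m_{h-1})}\}$.

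The key point is to verify that $\lambda_h$ still satisfies the hypothesis of Theorem~\ref{thm:addeig} relative to $B$, i.e.\ that $\lambda_h\notin\spec(B)\cup\spec(B(v_h))$. For the full spectrum this is immediate since $\lambda_h$ is distinct from $\lambda_1,\ldots,\lambda_{h-1}$ and from all eigenvalues of $A$ by hypothesis, and $\spec(B)$ consists precisely of $\spec(A)$ together with copies of the other $\lambda_i$'s. For the submatrix condition $\lambda_h\notin\spec(B(v_h))$ I would invoke Remark~\ref{rem:submatrix}: at each stage of the induction the matrix produced by Theorem~\ref{thm:addeig} can be taken arbitrarily close to the previous matrix direct-summed with a scalar multiple of the identity, so the property ``$\mu\notin\spec(\cdot)\cup\spec(\cdot(u))$ for every vertex $u$'' is preserved for every $\mu\in\{\lambda_1,\ldots,\lambda_h\}$ other than the one currently being added. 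Applying this with $\mu=\lambda_h$ (which is never the eigenvalue added at steps $1,\ldots,h-1$ because the $\lambda_i$ are distinct) gives $\lambda_h\notin\spec(B)\cup\spec(B(u))$ for all $u\in V(H^{(h-1)})$, in particular for $u=v_h$.

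With that hypothesis checked, Theorem~\ref{thm:addeig} applied to $B$, the vertex $v_h$, the value $\lambda_h$, and $s=m_h$ produces $\hat A\in\calS(H^{(h-1)}\oplus_{v_h} K_{m_h+1})=\calS(H^{(h)})$ with the SSP and $$\spec(\hat A)=\spec(B)\cup\{\lambda_h^{(m_h)}\}=\spec(A)\cup\{\lambda_1^{(m_1)},\ldots,\lambda_h^{(m_h)}\},$$ completing the induction.

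The only real subtlety — and the step I would be most careful about — is bookkeeping the submatrix condition through the induction, since Theorem~\ref{thm:addeig} as stated only tracks $\spec(A')$ and not the spectra of its principal submatrices; this is exactly what Remark~\ref{rem:submatrix} is designed to handle, so the cleanest write-up folds the remark's ``arbitrarily close'' argument into the inductive hypothesis, strengthening it to include ``$\lambda_i\notin\spec(B)\cup\spec(B(v))$ for all $v$ and all $i$'' so that the induction is self-propagating. No genuinely hard estimate is needed; everything reduces to the perturbation bound already built into Theorems~\ref{thm:perturb} and~\ref{thm:addeig}.
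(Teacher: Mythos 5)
Your proposal is correct and follows essentially the same route as the paper: induction on the number of appended cliques, with Theorem~\ref{thm:addeig} supplying the inductive step and Remark~\ref{rem:submatrix} used to carry the condition $\lambda_j\notin\spec(\cdot)\cup\spec(\cdot(v))$ for the not-yet-used eigenvalues through each stage. Your observation that the inductive hypothesis must be strengthened to include this submatrix-avoidance condition is precisely how the paper phrases its induction, so no further comment is needed.
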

\begin{proof}
We prove the result by induction on $h$. For $h=0$, we take $A=A^{(0)}$ and there is nothing to prove.

Assume now that there exists $A^{(i-1)} \in \calS(H^{(i-1)})$ with the SSP, $\spec (A^{(i-1)})=\spec(A) \cup \{\lambda_1^{(m_
1)},\cdots,\lambda_{i-1}^{(m_{i-1})}\}$, and none of $\lambda_i,\lambda_{i+1},\ldots,\lambda_h$ is an eigenvalue of $A^{(i-1)}(v)$ for any $v \in V(H^{(i-1)})$. By Theorem \ref{thm:addeig} applied to $A^{(i-1)}$, there exists 
$A^{(i)} \in \calS(H^{(i)})$ with the SSP and  
\[\spec(A^{(i)}) = \spec(A^{(0)})\cup\{\lambda_1^{(m_1)},\ldots,\lambda_i^{(m_i)}\}.\]
Finally, Remark \ref{rem:submatrix} allows us to assert $\{\lambda_{i+1},\ldots,\lambda_h\} \not\in A^{(i)}(v)$ for any $v \in V(H^{(i)})$. 
\end{proof}


\section{Block Graphs}
\label{sec:block}

While techniques developed in this work can be applied more broadly, we use a family of graphs known as block graphs as an illustrative example.

\begin{definition}
A \emph{block} of a graph is its maximal $2$-connected induced subgraph. 

A \emph{block graph} $G$ is a graph whose blocks are cliques.  The family of block graphs with blocks of sizes $m_1,m_2, \ldots, m_h$ will be denoted by $\mc{BG}(m_1,\ldots,m_h)$.  Consequently, every $G \in \mc{BG}(m_1,\ldots,m_h)$ has $1+\sum_{i=1}^h (m_i-1)$ vertices.
\end{definition}

 \begin{center}
 \begin{figure}[htb]
 \centering
\begin{subfigure}{0.3\textwidth}
\centering
\begin{tikzpicture}[scale=0.7]
	\node (0) at (0,0) {};
  	\node (1) at (90:1) {};
  	\node (2) at (150:1) {};
  	\node (3) at (210:1) {};
  	\node (4) at (270:1) {};
             \draw  (3) -- (4);
	    \foreach \x in {1,2,3,4} {
	                  \draw  (0) -- (\x);
	                    \node[fill=white] at (\x) {};
}
  \begin{scope}[shift={(1,0)}]
    \foreach \angle in {0, 60,120} {
              \draw  (\angle:1)--(\angle+180:1);
     }
    \foreach \angle in {0, 60,120,180,240,300} {
             \draw  (\angle:1)--(\angle+60:1);
              \draw  (\angle:1)--(\angle+120:1);}
      \foreach \angle in {0, 60,120,240,300} {
    \node[fill=white] at (\angle:1) {};
     }
  \end{scope}
       \node[fill=gray] at (0,0) {};
  \draw[opacity=0] (0,-1.3) -- ++(0,2.6); 
  \end{tikzpicture}
  \caption{$G_1$}
  \end{subfigure}
  \begin{subfigure}{0.3\textwidth}
\centering
\begin{tikzpicture}[scale=0.7]
	\node (0) at (0,0) {};
  	\node (1) at (-2,0.5) {};
  	\node[fill=gray] (2) at (150:1) {};
  	\node[fill=gray] (3) at (210:1) {};
  	\node (4) at (-2,-0.5) {};
             \draw  (0)--(2)--(3) -- (0);
              \draw  (2) -- (1);
              \draw  (4) -- (3);
  \begin{scope}[shift={(1,0)}]
    \foreach \angle in {0, 60,120} {
              \draw  (\angle:1)--(\angle+180:1);
     }
    \foreach \angle in {0, 60,120,180,240,300} {
             \draw  (\angle:1)--(\angle+60:1);
              \draw  (\angle:1)--(\angle+120:1);}
    \foreach \angle in {0, 60,120,240,300} {
    \node[fill=white] at (\angle:1) {};
     }
 \end{scope}
     \node[fill=gray] at (0,0) {};
  \draw[opacity=0] (0,-1.3) -- ++(0,2.6); 
  \end{tikzpicture}
  \caption{$G_2$}
  \end{subfigure}
\begin{subfigure}{0.33\textwidth}
\centering
\begin{tikzpicture}[scale=0.7]
  	\node[fill=gray] (1) at (-1,0) {};
  	\node (2) at (-2,0) {};
    \node (3) at (3,0) {};
  	\node (5) at (2.5,0.86) {};
             \draw  (0,0) -- (1) -- (2);
             \draw  (2,0) -- (3)--(5)--(2,0);
  \begin{scope}[shift={(1,0)}]
    \foreach \angle in {0, 60,120} {
              \draw  (\angle:1)--(\angle+180:1);
     }
    \foreach \angle in {0, 60,120,180,240,300} {
             \draw  (\angle:1)--(\angle+60:1);
              \draw  (\angle:1)--(\angle+120:1);}
 \foreach \angle in {60,120,240,300} {
    \node[fill=white] at (\angle:1) {};
     } \end{scope}
         \node[fill=gray] at (0,0) {};
         \node[fill=gray] at (2,0) {};
  \draw[opacity=0] (0,-1.3) -- ++(0,2.6); 
  \end{tikzpicture}
  \caption{$G_3$}
  \label{fig:G3}
  \end{subfigure}
     \caption{Three examples of block graphs $G_i \in \mc{BG}(6,3,2,2)$. The cut-vertices are colored gray.}\label{6322}
     \end{figure}
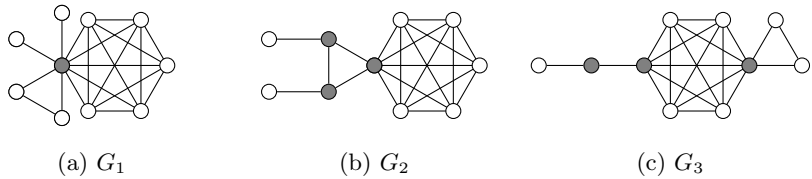
  \end{center}



\begin{definition}
A block graph $G$ is a \emph{minimal block graph} if every block in $G$ contains at most one non-cut vertex of $G$.  When $H$ is a block graph, the \emph{minimal block graph corresponding to $H$} is obtained from $H$ by removing (if any) all but one non-cut vertices in each block.
\end{definition}

Note that a block graph and its corresponding minimal block graph have the same cut vertices, and the same number of blocks. 
In Figure~\ref{6322} we present some examples of block graphs in $\mc{BG}(6,3,2,2)$, while their corresponding minimal block graphs are given in Figure~\ref{fig:minimal}. Note that every block graph is a blowup of its minimal block graph.

\begin{center}
\begin{figure}[htb]
\centering
\begin{subfigure}{0.3\linewidth}
  \centering
  \begin{tikzpicture}[scale=0.7]
      \node[fill=gray] (0) at (0:0) {};
      \foreach \i in {1,...,4} {
        \pgfmathsetmacro{\angle}{90*(\i-1)}
        \node (\i) at (\angle:1) {};
        \draw  (\i) -- (0);
       }
    \draw[draw=none] (0,-1.2) -- +(0,2.4); 
  \end{tikzpicture}
  \caption{$H_1\in\mc{BG}(2,2,2,2)$}
\end{subfigure}
\begin{subfigure}{0.3\textwidth}
\centering
\begin{tikzpicture}[scale=0.7]
	\node[fill=gray] (0) at (0,0) {};
  	\node (1) at (-2,0.5) {};
  	\node[fill=gray] (2) at (150:1) {};
  	\node[fill=gray] (3) at (210:1) {};
  	\node (4) at (-2,-0.5) {};
      \node (5) at (1,0) {};
             \draw  (0)--(2)--(3) -- (0)--(5);
              \draw  (2) -- (1);
              \draw  (4) -- (3);
    \draw[draw=none] (0,-1.2) -- +(0,2.4); 
   \end{tikzpicture}
  \caption{$H_2\in\mc{BG}(3,2,2,2)$}
\end{subfigure}
\begin{subfigure}{0.3\linewidth}
\centering
\begin{tikzpicture}[scale=0.7]
	\node[fill=gray] (0) at (0,0) {};
  	\node[fill=gray] (1) at (-1,0) {};
  	\node (2) at (-0.5,0.86) {};
  	\node[fill=gray] (3) at (-2,0) {};
  	\node (4) at (-3,0) {};
  	\node (5) at (1,0) {};
             \draw  (0) -- (1) -- (2) -- (0)--(5);
             \draw  (4) -- (3) -- (1);
    \draw[draw=none] (0,-1.2) -- +(0,2.4); 
   \end{tikzpicture}
  \caption{$H_3\in\mc{BG}(3,2,2,2)$}
\end{subfigure}
     \caption{Three examples of minimal block graphs, where for $i=1,2,3$, $H_i$ is the minimal block graph corresponding to $G_i$ in Figure~\ref{6322}. The cut-vertices are colored gray.}\label{fig:minimal}
     \end{figure}
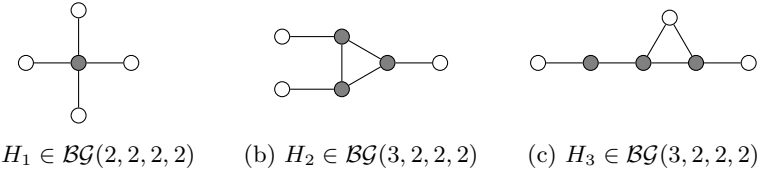
  \end{center}
 
 Block graphs  are also called block-clique graphs in \cite[Subsection~4.1]{MR3904092}. Following this terminology,  graph $G_1$ in Figure \ref{6322} is an example of a clique-star graph and $G_3$ in Figure \ref{6322} is an example of a clique-path graph. 

Complete graphs will be building blocks of our constructions. To avoid certain technical issues, we start this section by showing that realizations of spectra with two distinct eigenvalues in $\mc S(K_n)$ can be made generic enough.  The following lemma is a special case of \cite[Lemma~2.2]{MR364288}. We offer an alternative proof for completion.

\begin{lemma}\label{lemma:eig complete}
Let $\bx \in \R^n$ and $\by \in \R^m$. The matrix 
\begin{equation}\label{eq:blokrank1}
C=\npmatrix{\alpha_1 \bx\bx\trans+\beta_1 I_n & \gamma \bx\by\trans \\ \gamma \by \bx\trans & \alpha_2 \by\by\trans +\beta_2 I_{m}}
\end{equation}
has eigenvalues $(\mu_1,\mu_2,\beta_1^{(n-1)},\beta_2^{(m-1)}),$ where $\mu_1$ and $\mu_2$ are the eigenvalues of 
$$C'=\npmatrix{\alpha_1 \|\bx\|^2 +\beta_1 & \gamma \|\bx\|\, \|\by\| \\ \gamma \|\bx\| \, \|\by\| & \alpha_2 \|\by \|^2 +\beta_2}.$$
\end{lemma}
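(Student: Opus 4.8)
The plan is to exhibit an explicit orthogonal decomposition of $\R^{n+m}$ adapted to the block structure of $C$, and read off the eigenvalues from it. First I would normalize: set $\hat{\bx}=\bx/\|\bx\|$ and $\hat{\by}=\by/\|\by\|$ (assuming $\bx,\by\neq\bzero$; the degenerate cases where one of them vanishes are handled separately and are easier, since then $C$ is block diagonal up to the $\beta$-shifts). Consider the subspace $U=\operatorname{span}\{(\hat{\bx},\bzero),(\bzero,\hat{\by})\}\subseteq\R^n\oplus\R^m$, which is $2$-dimensional, and its orthogonal complement $U^\perp=\{(\bu,\bzero):\bu\perp\bx\}\oplus\{(\bzero,\bv):\bv\perp\by\}$, of dimension $(n-1)+(m-1)$.

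The key computation is that both $U$ and $U^\perp$ are $C$-invariant. For a vector $(\bu,\bzero)$ with $\bu\perp\bx$, the top-left block sends it to $\alpha_1\bx(\bx\trans\bu)+\beta_1\bu=\beta_1\bu$, and the off-diagonal block $\gamma\by\bx\trans$ sends it to $\gamma\by(\bx\trans\bu)=\bzero$; hence $C(\bu,\bzero)=(\beta_1\bu,\bzero)$, so every such vector is an eigenvector with eigenvalue $\beta_1$, giving the $\beta_1^{(n-1)}$ block. Symmetrically the vectors $(\bzero,\bv)$ with $\bv\perp\by$ give eigenvalue $\beta_2$ with multiplicity $m-1$. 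For the invariant subspace $U$, I would compute $C$ restricted to the ordered basis $\{(\hat{\bx},\bzero),(\bzero,\hat{\by})\}$: using $\bx\trans\hat{\bx}=\|\bx\|$ one gets $C(\hat{\bx},\bzero)=(\alpha_1\|\bx\|^2+\beta_1)(\hat{\bx},\bzero)+\gamma\|\bx\|\,\|\by\|(\bzero,\hat{\by})$, and similarly $C(\bzero,\hat{\by})=\gamma\|\bx\|\,\|\by\|(\hat{\bx},\bzero)+(\alpha_2\|\by\|^2+\beta_2)(\bzero,\hat{\by})$. Thus the matrix of $C|_U$ in this orthonormal basis is exactly $C'$, so the remaining two eigenvalues of $C$ are $\mu_1,\mu_2$, the eigenvalues of $C'$.

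Since $\R^{n+m}=U\oplus U^\perp$ is an orthogonal direct sum of $C$-invariant subspaces, the spectrum of $C$ is the union of the spectrum of $C|_U$ and that of $C|_{U^\perp}$, which yields $(\mu_1,\mu_2,\beta_1^{(n-1)},\beta_2^{(m-1)})$ as claimed. I do not anticipate a serious obstacle here; the only point requiring a little care is the bookkeeping of the degenerate cases $\bx=\bzero$ or $\by=\bzero$ (and also $\gamma=0$, or $n=1$ or $m=1$), where the dimension counts for $U$ and $U^\perp$ change — but in each such case $C$ decouples and $C'$ degenerates compatibly, so the eigenvalue list still matches. I would dispatch these in a sentence rather than belabor them.
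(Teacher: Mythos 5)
Your proof is correct, but it takes a genuinely different route from the paper. The paper computes the characteristic polynomial of $C$ directly: it inverts the block $Y_\lambda=\alpha_2\by\by\trans+(\beta_2-\lambda)I_m$ via the Sherman--Morrison formula, takes the Schur complement of $C-\lambda I_{m+n}$, and factors the result as $(\beta_1-\lambda)^{n-1}(\beta_2-\lambda)^{m-1}\det(C'-\lambda I_2)$. You instead exhibit the orthogonal, $C$-invariant decomposition $\R^{n+m}=U\oplus U^\perp$ with $U=\operatorname{span}\{(\hat\bx,\bzero),(\bzero,\hat\by)\}$, observe that $U^\perp$ consists entirely of eigenvectors for $\beta_1$ and $\beta_2$, and identify the matrix of $C|_U$ in the orthonormal basis $\{(\hat\bx,\bzero),(\bzero,\hat\by)\}$ as exactly $C'$. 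Your computations check out (e.g., $C(\hat\bx,\bzero)=((\alpha_1\|\bx\|^2+\beta_1)\hat\bx,\;\gamma\|\bx\|\,\|\by\|\,\hat\by)$ is right), and the dimension count $2+(n-1)+(m-1)=n+m$ closes the argument. What your approach buys: it is more elementary (no matrix inversion, no generic-$\lambda$/polynomial-identity step, which the paper's use of $Y_\lambda^{-1}$ implicitly requires), and it produces the eigenvectors explicitly, which is the information actually consumed downstream by Lemma~\ref{HS04} and Corollary~\ref{cor:two distinct}. What the paper's approach buys is brevity: one determinant identity rather than several invariance checks. Your treatment of the degenerate cases ($\bx=\bzero$, $\by=\bzero$, $\gamma=0$, $n=1$ or $m=1$) is dismissed in a sentence, but that is defensible since in each case $C$ decouples and the stated eigenvalue list is verified directly; the paper does not address these cases at all.
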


\begin{proof}
 Note that Sherman--Morrison formula implies that the inverse of matrix $Y_\lambda=\alpha_2\by\by\trans+(\beta_2-\lambda)I_m$ is equal to $Y_\lambda^{-1}=\frac{1}{\beta_2-\lambda}\left(I_m-\frac{\alpha_2\by\by\trans}{(\beta_2-\lambda)+\alpha_2\|\by\|^2}\right)$
 and that $\det(Y_\lambda)=(\beta_2-\lambda)^{m-1}(\beta_2-\lambda+\alpha_2\|\by\|^2)$.
 Using the Schur complement of a matrix $C-\lambda I_{m+n}$ we get  with some computation that the characteristic polynomial of matrix $C$ is equal to 
 \begin{align*}
 \det(C-\lambda I_{m+n})&=\det(Y_\lambda)\cdot \det(\alpha_1 \bx \bx\trans+(\beta_1-\lambda)I_n-\gamma^2\bx\by\trans Y_\lambda^{-1}\by\bx\trans) \\
 &=(\beta_1-\lambda)^{n-1}(\beta_2-\lambda)^{m-1}\det(C'-\lambda I_{2}).
\end{align*}
The assertion of the lemma follows.
\end{proof}

\begin{corollary}\label{cor:two distinct}
Let $\lambda_1, \lambda_2 \in \R$, $\lambda_1 \neq \lambda_2,$ $n_1$ and $n_2$ positive integers, $n=n_1+n_2$, and $\mc F$ and $\mc G$ finite sets of real numbers, so that $\{\lambda_1,\lambda_2\}\cap \mc F=\emptyset$.

Then there exists $A \in \calS(K_{n})$ with eigenvalues $\{\lambda_1^{(n_1)}, \lambda_2^{(n_2)}\}$ such that $A(v)$ has no eigenvalues in $\mc F$ for all $v \in V(K_{n})$, and $A$ has no diagonal elements in $\mc G$. 
\end{corollary}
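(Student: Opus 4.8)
The plan is to realize the desired matrix directly as a matrix of the form~\eqref{eq:blokrank1} from Lemma~\ref{lemma:eig complete}, using the freedom in the vectors $\bx,\by$ and the scalars $\alpha_1,\alpha_2,\beta_1,\beta_2,\gamma$ to simultaneously achieve the right spectrum, avoid bad eigenvalues of principal submatrices, and avoid bad diagonal entries. First I would set $\beta_1=\beta_2=\lambda_2$ and look for $C$ of the shape
\[
C=\npmatrix{\alpha_1\bx\bx\trans+\lambda_2 I_{n_1} & \gamma\bx\by\trans \\ \gamma\by\bx\trans & \alpha_2\by\by\trans+\lambda_2 I_{n_2}},
\]
so that by Lemma~\ref{lemma:eig complete} the spectrum of $C$ is $\{\mu_1,\mu_2,\lambda_2^{(n-2)}\}$, where $\mu_1,\mu_2$ are the eigenvalues of the $2\times 2$ matrix $C'$. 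Choosing $\bx,\by$ to be nowhere-zero vectors (so that $C\in\calS(K_n)$ requires every off-diagonal entry nonzero, which holds as long as $\alpha_1,\alpha_2,\gamma\neq 0$), I then need to pick the remaining scalars so that $C'$ has eigenvalues $\{\lambda_1,\lambda_2\}$; since $n_1+n_2=n$ and one of $\mu_1,\mu_2$ may be taken to be $\lambda_1$ and we need $\lambda_1$ with total multiplicity $n_1$, I should be more careful: take $\mu_1=\mu_2=\lambda_1$ is impossible when $C'$ is a $2\times2$ matrix in $\calS(K_2)$ unless it is scalar. Instead, the cleaner route is to use a block structure with $\beta_1=\lambda_1$ on the size-$(n_1)$ block and $\beta_2=\lambda_2$ on the size-$(n_2)$ block, giving spectrum $\{\mu_1,\mu_2,\lambda_1^{(n_1-1)},\lambda_2^{(n_2-1)}\}$, and then forcing $\{\mu_1,\mu_2\}=\{\lambda_1,\lambda_2\}$ by a suitable choice of $\alpha_1,\alpha_2,\gamma$; this is a two-equation condition (prescribed trace $\lambda_1+\lambda_2$ and determinant $\lambda_1\lambda_2$ of $C'$) in three unknowns $\alpha_1\|\bx\|^2,\alpha_2\|\by\|^2,\gamma\|\bx\|\|\by\|$, hence has a one-parameter family of solutions with $\gamma\neq 0$.

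The second step is to handle the two genericity requirements. For the diagonal: the diagonal entries of $C$ are $\alpha_1 x_i^2+\lambda_1$ (for $i$ in the first block) and $\alpha_2 y_j^2+\lambda_2$ (for $j$ in the second block). Having fixed $\alpha_1,\alpha_2$ from the previous step, I still have freedom in the individual coordinates $x_i,y_j$ subject only to the constraints on $\|\bx\|,\|\by\|$ and nowhere-zeroness; since avoiding the finite set $\mc G$ removes only finitely many values for each coordinate, a generic choice of coordinates with the prescribed norms works (the set of norm-$r$ vectors with some coordinate in a prescribed finite set is a proper closed subset). For the principal submatrices $A(v)$: deleting a vertex $v$ from the first block leaves a matrix of the form $\alpha_1\hat\bx\hat\bx\trans+\lambda_1 I_{n_1-1}$ in the top block (where $\hat\bx$ drops the $v$th coordinate) coupled to the unchanged second block, and by Lemma~\ref{lemma:eig complete} again its spectrum is $\{\mu_1',\mu_2',\lambda_1^{(n_1-2)},\lambda_2^{(n_2-1)}\}$ with $\mu_1',\mu_2'$ the eigenvalues of the $2\times2$ matrix built from $\|\hat\bx\|$ and $\|\by\|$; these depend continuously on $\|\hat\bx\|^2=\|\bx\|^2-x_v^2$. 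So I need: for every $v$, the two moving eigenvalues avoid $\mc F$, and (since $\lambda_1,\lambda_2\notin\mc F$ already by hypothesis on $\lambda_1$ — wait, only $\lambda_1,\lambda_2\notin\mc F$ is assumed via $\{\lambda_1,\lambda_2\}\cap\mc F=\emptyset$) the fixed eigenvalues $\lambda_1,\lambda_2$ are automatically outside $\mc F$. Thus only finitely many "moving" eigenvalues (two per deleted vertex, so $2n$ of them) must avoid the finite set $\mc F$, and each is a non-constant real-analytic function of the coordinates; a standard perturbation/genericity argument rules out the finitely many bad configurations.

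The cleanest way to organize the genericity argument is: first choose $\alpha_1,\alpha_2$ and the norms $\rho_1=\|\bx\|,\rho_2=\|\by\|$ and $\gamma$ so that $C'$ has eigenvalues $\lambda_1,\lambda_2$; note this leaves, say, $\rho_1$ as a free parameter on an open interval (adjusting $\gamma,\alpha_2$ accordingly), and observe that the $2n$ moving-eigenvalue functions of the submatrices, as well as the requirement $\gamma\neq0$ and nowhere-zeroness, depend on the remaining parameters; since each moving eigenvalue is a non-constant analytic function of the free parameters, its level sets at the finitely many points of $\mc F$ are nowhere dense, so a generic choice avoids all of them. Then fix all norms and scalars and choose the individual coordinates $x_i,y_j$ with the prescribed norms to additionally dodge the finite set $\mc G$ on the diagonal. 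Finally invoke Lemma~\ref{lemma:eig complete} once more to confirm $\spec(A)=\{\lambda_1^{(n_1)},\lambda_2^{(n_2)}\}$ and record $A\in\calS(K_n)$ since all off-diagonal entries $\alpha_1 x_ix_j$, $\alpha_2 y_iy_j$, $\gamma x_i y_j$ are nonzero.

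The main obstacle I anticipate is \emph{bookkeeping the parameter count}: I must verify that after imposing the two equations pinning down $\spec(C')$ there genuinely remains enough freedom (a positive-dimensional family) for the moving eigenvalues of all $n$ principal submatrices to be steered off the finite set $\mc F$ simultaneously, and that this freedom is "independent" enough that one cannot be forced onto $\mc F$ identically. Because there are only finitely many constraints (finitely many eigenvalues, each avoiding a finite set) against a continuous (indeed analytic) family of realizations, this should go through by a routine dimension/genericity argument, but making the non-degeneracy of the relevant eigenvalue functions precise — i.e., checking they are not constant on the parameter family — is the one place that needs genuine care rather than routine computation.
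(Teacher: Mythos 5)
Your proposal follows essentially the same route as the paper: realize $A$ in the block rank-one form of Lemma~\ref{lemma:eig complete} with $\beta_1=\lambda_1$, $\beta_2=\lambda_2$, pin down $\spec(C')=\{\lambda_1,\lambda_2\}$ via the trace/determinant conditions (the paper takes the concrete slice $\alpha_2=-\alpha_1$, $\gamma=\sqrt{\alpha_1(\lambda_2-\lambda_1-\alpha_1)}$ with nowhere-zero unit vectors, leaving $\alpha_1\in(0,\lambda_2-\lambda_1)$ as the free parameter), and then use the leftover freedom in $\alpha_1$ and in the coordinates of $\bx,\by$ to steer the finitely many moving submatrix eigenvalues off $\mc F$ and the diagonal entries off $\mc G$. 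The only point to tidy is the edge case $n_1=1$ (or $n_2=1$), where $C(v)$ for the lone vertex of a block degenerates to $\alpha_2\by\by\trans+\lambda_2 I$ rather than another two-block matrix; the paper handles this separately, and your genericity argument covers it by the same reasoning.
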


\begin{proof}
Without loss of the generality, assume $\lambda_1<\lambda_2$. In order for the matrix $C$ of the form \eqref{eq:blokrank1} to have eigenvalues $\{\lambda_1^{(n_1)}, \lambda_2^{(n_2)}\}$, we choose in Lemma \ref{lemma:eig complete}  nowhere zero unit vectors $\bx \in \R^{n_1}$, $\by \in \R^{n_2}$, 
$\beta_1=\lambda_1$, $\beta_2=\lambda_2$, $\alpha_2=-\alpha_1$, $\gamma=\sqrt{\alpha_1 (\lambda_2-\lambda_1-\alpha_1)}$, and $\alpha_1$ so that $\alpha_1 (\lambda_2-\lambda_1-\alpha_1) >0.$  Note that $\alpha_1$ can be any real number in the open interval $(0,\lambda_2-\lambda_1)$, so it can be chosen so that the diagonal entries of $C$ avoid elements in $\mathcal{G}$; moreover, any small perturbation of $\alpha_1$ maintains the same property.

When $n_1=1$ and $v=1$, the spectrum of $C(v) =\alpha_2\by\by\trans + \beta_2 I_{n_2}$ is $\{-\alpha_1+\lambda_2, \lambda_2^{(n_2-1)}\}$ and can be chosen to avoid elements in $\mathcal{F}$.  The case for $n_2=1$ and $v=n$ follows from a similar argument.

Assume that $n_1>1$ and $n_2>1$. To complete the proof we note that $C(v)$ again has the form \eqref{eq:blokrank1} where either $\bx$ is replaced by $\bx(v)$ or $\by$ is replaced by $\by(v)$, and the eigenvalues of $C(v)$ can be deduced from  Lemma \ref{lemma:eig complete}. Thus, we may choose unit vectors $\bx$ and $\by$ so that the eigenvalues of matrices  
$$\npmatrix{\alpha_1 \|\bx(i)\|^2 +\beta_1 & \gamma \|\bx(i)\|\, \|\by\| \\ \gamma \|\bx(i)\| \, \|\by\| & \alpha_2 \|\by \|^2 +\beta_2} \text{ and } \npmatrix{\alpha_1 \|\bx\|^2 +\beta_1 & \gamma \|\bx\|\, \|\by(j)\| \\ \gamma \|\bx\| \, \|\by(j)\| & \alpha_2 \|\by(j) \|^2 +\beta_2},$$
$i=1,\ldots,n$, $j=1,\ldots,m$, avoid $\mc F$, while at the same time the diagonal elements of the matrix of the form \eqref{eq:blokrank1} avoid $\mc G$ for those vectors.
\end{proof}

In Theorem~\ref{cor:blockgraph} we will use matrices constructed in Corollary \ref{cor:two distinct} to produce a family of multiplicity lists that are realizable by a matrix with SSP for block graphs. 

\begin{definition}
The \emph{refinement} of a multiset of positive integers $\{a_1,\ldots,a_h\}$ is a multiset of positive integers 
\[\{b_{1,1},\ldots,b_{1,r_1}, b_{2,1},\ldots,b_{2,r_2},\ldots,b_{h,1},\ldots,b_{h,r_h}\}\]
such that $\sum_{j=1}^{r_i} b_{i,j} = a_i$ for every $i=1,\ldots,h$.

A multiset of positive integers $\{a_1,\ldots, a_\ell\}$ \emph{covers} another multiset of positive integers $\{b_1,\ldots, b_h\}$ if $\ell\geq h$ and there are $h$ elements $a_{i_1},\ldots, a_{i_h}$ such that $a_{i_j}\geq b_j$ for every $j=1,\ldots,h$.  Equivalently, the $i$-th largest element in $a_i$'s is greater than or equal to the $i$-the largest element in $b_i$'s for $i=1,\ldots,h$.
\end{definition}

\begin{figure}[h]
\begin{center}
\begin{tikzpicture}[scale=0.6]
  \foreach \i in {1,...,6} {
    \pgfmathsetmacro{\ang}{-90 + 60*\i}
    \node (\i) at (\ang:1) {};
  }
  \begin{scope}[yshift=-2.5cm]
  \foreach \i in {7,...,10} {
    \pgfmathsetmacro{\ang}{-135 + 90*(\i-6)}
    \node (\i) at (\ang:1) {};
  }
  \end{scope}
  \begin{scope}[yshift=-4.7cm]
  \foreach \i in {11,12,13} {
    \pgfmathsetmacro{\ang}{30 + 120*(\i-10)}
    \node (\i) at (\ang:1) {};
  }
  \end{scope}
  \foreach \i in {1,...,5} {
    \foreach \j in {\i,...,6} {
      \draw (\i) -- (\j);
    }
  }
  \foreach \i in {6,...,9} {
    \foreach \j in {\i,...,10} {
      \draw (\i) -- (\j);
    }
  }
  \foreach \i in {10,...,12} {
    \foreach \j in {\i,...,13} {
      \draw (\i) -- (\j);
    }
  }
  \node[rectangle, draw=none] at (0,-6.5) {$G$};
  \draw[draw=none] (0,-7) -- + (0.1,0); 
\end{tikzpicture}
\hfil
\begin{tikzpicture}
\node[draw=none] at (0,-4.7) {
\begin{tabular}{c|c||c|c}
 ~ & $G$ & $\hat{G}$ & ~ \\
 \hline
 $k$ & $2$ & $2$ & $k$ \\
 $m_1+1-k$ & $4$ & $2,2$ & $m_{1,1}, m_{1,2}$ \\
 $m_2$ & $4$ & $3,1$ & $m_{2,1},m_{2,2}$ \\
 $m_3$ & $3$ & $3$ & $m_{3,1}$ \\
\end{tabular}
};
\draw[draw=none] (0,-7) -- + (0.1,0); 
\end{tikzpicture}
\hfil
\begin{tikzpicture}[scale=0.6]
  \foreach \i in {1,...,6} {
    \pgfmathsetmacro{\ang}{-90 + 60*\i}
    \node (\i) at (\ang:1) {};
  }
  \begin{scope}[yshift=-2.5cm]
  \foreach \i in {7,...,10} {
    \pgfmathsetmacro{\ang}{-135 + 90*(\i-6)}
    \node (\i) at (\ang:1) {};
  }
  \end{scope}
  \begin{scope}[yshift=-4.7cm]
  \foreach \i in {11,12,13} {
    \pgfmathsetmacro{\ang}{30 + 120*(\i-10)}
    \node (\i) at (\ang:1) {};
  }
  \end{scope}
  \foreach \i in {1,...,3} {
    \foreach \j in {\i,...,4} {
      \draw (\i) -- (\j);
    }
  }
  \foreach \i in {4,...,5} {
    \foreach \j in {\i,...,6} {
      \draw (\i) -- (\j);
    }
  }
  \foreach \i in {6,...,8} {
    \foreach \j in {\i,...,9} {
      \draw (\i) -- (\j);
    }
  }
  \draw (9) -- (10);
  \foreach \i in {10,...,12} {
    \foreach \j in {\i,...,13} {
      \draw (\i) -- (\j);
    }
  }
  \node[rectangle, draw=none] at (0,-6.5) {$\hat{G}$};
  \draw[draw=none] (0,-7) -- + (0.1,0); 
\end{tikzpicture}
\end{center}
\caption{An illustration of $G\in \mc{BG}(6,5,4)$ and $\hat{G}$ in the proof of Theorem~\ref{cor:blockgraph}, following the chosen parameters from the table.}
\end{figure}
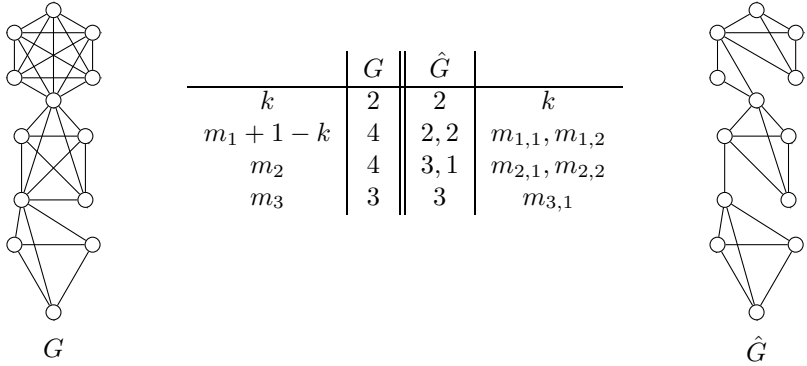

\begin{theorem}
\label{cor:blockgraph}
Let $m_1,\ldots,m_h$ be positive integers, $G \in \mc{BG}(m_1+1,m_2+1,\ldots,m_h+1)$, $1\leq k \leq m_1$, $\underline{r}$ a refinement  of $\{k,m_1+1-k,m_2,\ldots,m_h\}$, and $\sigma$ a multiset of real numbers with multiplicities $\underline{r}$. Then $\sigma$ can be realized by $A \in \calS(G)$ with the SSP.
\end{theorem}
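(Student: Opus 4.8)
\emph{Overall approach.} I would realize $\sigma$ on a carefully built spanning subgraph $\hat G$ of $G$ and then transfer to $G$ by Theorem~\ref{thm:sspsuper}. Write $\underline r=\{r_1,\dots,r_N\}$, so that $\sigma=\{\theta_1^{(r_1)},\dots,\theta_N^{(r_N)}\}$ with $\theta_1,\dots,\theta_N$ distinct, and keep track of which element of $\{k,m_1+1-k,m_2,\dots,m_h\}$ each piece $r_j$ refines. Since $k\ge1$ and $m_1+1-k\ge1$, the refinements of $k$ and of $m_1+1-k$ are both nonempty, so I may index so that $r_1$ refines $k$ and $r_2$ refines $m_1+1-k$ (in particular $N\ge2$ and $\theta_1\ne\theta_2$); note also $r_1+r_2\le k+(m_1+1-k)=m_1+1$.

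\emph{The graph $\hat G$.} Replace each block $B$ of $G$ by a connected block graph on the same vertex set, glued to the rest of $G$ along the cut vertices of $G$ contained in $B$: inside $B_1$ (size $m_1+1$) use one block of size $r_1+r_2$ together with one block of size $r+1$ for each remaining piece $r$ of the refinements of $k$ and of $m_1+1-k$; inside each $B_i$ with $i\ge2$ (size $m_i+1$) use one block of size $r+1$ for each piece $r$ of the refinement of $m_i$. A direct count shows each replacement occupies exactly $|V(B)|$ vertices, so $\hat G$ is a connected block graph on $V(G)$, and since every new block sits inside the clique $B\subseteq G$, the graph $\hat G$ is a spanning subgraph of $G$. (This is precisely the passage from $G$ to $\hat G$ shown in the table for $G\in\mc{BG}(6,5,4)$, $k=2$.)

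\emph{Realizing $\sigma$ on $\hat G$.} Root the block-cut tree of $\hat G$ at the distinguished block $K_{r_1+r_2}$ inside $B_1$. By Corollary~\ref{cor:two distinct}, applied with $n_1=r_1$, $n_2=r_2$, $\lambda_1=\theta_1$, $\lambda_2=\theta_2$, $\mc F=\{\theta_3,\dots,\theta_N\}$ and $\mc G=\emptyset$, there is $A_0\in\calS(K_{r_1+r_2})$ with $\spec(A_0)=\{\theta_1^{(r_1)},\theta_2^{(r_2)}\}$ and $\theta_j\notin\spec(A_0(v))$ for every vertex $v$ and every $j\ge3$; moreover $A_0$ has the SSP, since $\overline{K_{r_1+r_2}}$ is edgeless. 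Enumerating the remaining blocks of $\hat G$ in a top-down order of this rooted tree, each one is attached to the union of the preceding ones at a single vertex, namely its parent cut vertex, so $\hat G$ arises from $K_{r_1+r_2}$ by the clique vertex-sums of Corollary~\ref{cor:appcliques}. Applying that corollary with the distinct reals $\theta_3,\dots,\theta_N$ and the corresponding pieces $r_3,\dots,r_N$ as multiplicities (the hypothesis $\theta_j\notin\spec(A_0)\cup\spec(A_0(v))$ holding by the choice of $\mc F$), I get $\hat A\in\calS(\hat G)$ with the SSP and $\spec(\hat A)=\spec(A_0)\cup\{\theta_3^{(r_3)},\dots,\theta_N^{(r_N)}\}=\sigma$. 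Finally, since $\hat G$ is a spanning subgraph of $G$, Theorem~\ref{thm:sspsuper} produces $A\in\calS(G)$ with the SSP and $\spec(A)=\sigma$.

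\emph{Main difficulty.} Everything numerical is a direct chaining of Corollaries~\ref{cor:two distinct} and~\ref{cor:appcliques} with Theorem~\ref{thm:sspsuper}; the real work lies in the combinatorial step of defining $\hat G$ and checking that it has the required properties: that the blocks of $G$ can be simultaneously subdivided into the prescribed smaller cliques on their own vertex sets while every cut vertex of $G$ remains available as a gluing point, that the result is a block graph and a spanning subgraph of $G$, and that a top-down traversal of its block-cut tree really does add one clique at a time attached at a single vertex, so that Corollary~\ref{cor:appcliques} applies verbatim. The cleanest way to organize this is to root the block-cut tree of $G$ at $B_1$ and argue block by block.
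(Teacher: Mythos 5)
Your proof is correct and follows essentially the same route as the paper's: realize two of the eigenvalues on an initial clique via Corollary~\ref{cor:two distinct} (arranging the principal submatrices to avoid the remaining target values), append the remaining cliques one at a time via Corollary~\ref{cor:appcliques}, and pass from the spanning block subgraph $\hat G$ to $G$ by Theorem~\ref{thm:sspsuper}. The only cosmetic difference is that you handle a nontrivial refinement of $k$ directly by appending extra cliques inside $B_1$, whereas the paper absorbs this case into the freedom of choosing $k$ and the refinement of $m_1+1-k$; you are also somewhat more explicit than the paper about why $\hat G$ sits inside the given $G$ and can be assembled by successive single-vertex clique sums.
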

\begin{proof}
Let
$$\underline{r}=\{k,m_{1,1},\ldots,m_{1,r_1},m_{2,1},\ldots,m_{2,r_2},\ldots, m_{h,1},\ldots,m_{h,r_h}\}$$
be a refinement of the multiset $\{k,m_1+1-k, \ldots, m_h\}$, where $\sum_{j=1}^{r_1} m_{1,j}=m_1+1-k$ and~$\sum_{j=1}^{r_i} m_{i,j}=m_i$ for $i=2,\ldots,h$. Note that we do not need to consider partitions of $k$, since they are covered by the freedom we have in choosing $k$ and the partition of $m_1+1-k$.

Choose a graph 
$G \in\mc{BG}(m_1+1,m_2+1, \ldots, m_h+1)$ and its subgraph 
$\hat G \in\mc{BG}(m_{1,1}+k,m_{1,2}+1,\ldots,m_{1,r_1}+1, \ldots, m_{h,1}+1, \ldots,m_{h,r_h}+1)$. Let
$\lambda_0,\lambda_1,\ldots,\lambda_r$ be any distinct real numbers, where $r=\sum _{i=1}^{h} r_i$. 

By Corollary \ref{cor:two distinct} we can choose a matrix $A^{(0)} \in \calS(K_{m_{1,1}+k})$ with spectrum $\{\lambda_0^{(k)},\lambda_1^{(m_{1,1})}\}$ such that $\lambda_2,\lambda_3,\ldots,\lambda_r \notin \spec(A^{(0)}) \cup \spec(A^{(0)}(v))$ for any $v \in V(K_{m_{1,1}+k})$.  
Corollary~\ref{cor:appcliques} allows us to construct a matrix 
$\hat{A}\in \calS(\hat G)$
 with SSP and the spectrum 
$\spec(\hat{A})=\{\lambda_0^{(k)},\lambda_1^{(m_{1,1})},\ldots,\lambda_r^{(m_{h,r_h})}\}$.
Since $\hat{A}$ has the SSP and $G$ is a supergraph of $\hat G$, it follows by Theorem~\ref{thm:sspsuper} that there exists a matrix $A \in \calS(G)$ with the SSP and spectrum $\spec(A)=\spec(\hat{A})=\{\lambda_0^{(k)},\lambda_1^{(m_{1,1})},\ldots,\lambda_r^{(m_{h,r_h})}\}$.
\end{proof}

\begin{remark}\label{rem:diags}
To obtain our next result, we will apply Lemma \ref{HS04} to the matrix $A$ constructed in the proof above. To do that we need some information on the diagonal elements of $A$. Following the construction of $A$ we can  deduce that the diagonal of $A$ can be made arbitrarily close to the diagonal of
$A^{(0)}\oplus \lambda_2 I_{m_{1,2}} \oplus \cdots \oplus \lambda_r I_{m_{h,r_h}}$.
 Moreover, by Corollary~\ref{cor:two distinct} we can choose $A^{(0)}$ so that its diagonal elements avoid any given finite set of real numbers $\mc G$, and by Theorem \ref{thm:addeig} we know the  rest of diagonal elements, while arbitrarily close to $\lambda_i$, are not equal to $\lambda_i$.
\end{remark}

When a block graph is not minimal, we can extend Theorem \ref{cor:blockgraph} using the approach from Section \ref{sec:dup}.

\begin{theorem}\label{thm:multilist minBG}
Let $G \in \mc{BG}(n_1+1,\ldots,n_h+1)$ be a block graph with the corresponding minimal block graph $G_0 \in \mc{BG}(m_1+1,\ldots,m_h+1)$.  Then any multiset of positive integers $\hat{m}_0,\ldots,\hat{m}_\ell$ with $\sum_{j=0}^\ell \hat{m}_j = |G|$ that covers a refinement of $\{k,m_1+1-k,m_2,\ldots,m_h\}$ for some $k$ with $1\leq k\leq m_1$ is a spectrally arbitrary multiplicity list for $G$. 
\end{theorem}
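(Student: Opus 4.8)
The plan is to push the realization problem down to the minimal block graph $G_0$, where Theorem~\ref{cor:blockgraph} together with Remark~\ref{rem:diags} already produces a matrix with the prescribed (refined) spectrum and with diagonal avoiding any prescribed finite set, and then to recover $G$ from $G_0$ by a single blowup via Lemma~\ref{lem:blowup}.

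Fix the integer $k$, $1\le k\le m_1$, and the refinement $\underline r$ of $\{k,m_1+1-k,m_2,\dots,m_h\}$ provided by the hypothesis, and write $\underline r=\{r_0,\dots,r_{p-1}\}$ with $p=|\underline r|$. Since a refinement preserves the total sum, $\sum_{j=0}^{p-1}r_j=|G_0|$. The assumption that $\{\hat m_0,\dots,\hat m_\ell\}$ covers $\underline r$ says $\ell+1\ge p$ and, after relabeling both multisets, $\hat m_j\ge r_j$ for $0\le j\le p-1$, while $\hat m_p,\dots,\hat m_\ell$ are the ``spare'' multiplicities. Now let $\mu_0,\dots,\mu_\ell$ be arbitrary distinct real numbers. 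Applying Theorem~\ref{cor:blockgraph} to $G_0$ with $\mu_0,\dots,\mu_{p-1}$ as the distinct eigenvalues yields a matrix $A\in\calS(G_0)$ with the SSP and $\spec(A)=\{\mu_0^{(r_0)},\dots,\mu_{p-1}^{(r_{p-1})}\}$; moreover, by Remark~\ref{rem:diags} we may choose $A$ so that every diagonal entry of $A$ lies outside the finite set $\{\mu_0,\dots,\mu_\ell\}$.

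Since $G$ is a blowup of its minimal block graph $G_0$, and $|G|-|G_0|$ vertices are added in forming that blowup, it remains only to choose the eigenvalues fed to Lemma~\ref{lem:blowup}. Let $\sigma'$ be the multiset consisting of $\hat m_j-r_j$ copies of $\mu_j$ for $0\le j\le p-1$, together with $\hat m_j$ copies of $\mu_j$ for $p\le j\le\ell$; all these counts are nonnegative integers, and
\[
|\sigma'|=\sum_{j=0}^{p-1}(\hat m_j-r_j)+\sum_{j=p}^{\ell}\hat m_j=\sum_{j=0}^{\ell}\hat m_j-\sum_{j=0}^{p-1}r_j=|G|-|G_0|.
\]
Every element of $\sigma'$ lies in $\{\mu_0,\dots,\mu_\ell\}$, so the diagonal of $A$ avoids $\sigma'$; hence Lemma~\ref{lem:blowup} provides a matrix $A'\in\calS(G)$ with $\spec(A')=\spec(A)\cup\sigma'=\{\mu_0^{(\hat m_0)},\dots,\mu_\ell^{(\hat m_\ell)}\}$. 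As $\mu_0,\dots,\mu_\ell$ were arbitrary distinct reals, $\{\hat m_0,\dots,\hat m_\ell\}$ is a spectrally arbitrary multiplicity list for $G$.

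The step requiring care---and the reason Remark~\ref{rem:diags} was recorded---is the diagonal control on $A$: Lemma~\ref{lem:blowup} duplicates a vertex of $A$ only after inserting, through Lemma~\ref{HS04}, a complete-graph block whose repeated eigenvalue coincides with that vertex's diagonal entry, so a coincidence between a diagonal entry of $A$ and a value in $\sigma'$ would break the argument. Tracing the diagonal through the proof of Theorem~\ref{cor:blockgraph}---it can be made arbitrarily close to $A^{(0)}\oplus\mu_2 I\oplus\cdots\oplus\mu_{p-1}I$, the $A^{(0)}$-block avoids any prescribed finite set by Corollary~\ref{cor:two distinct}, and every later diagonal entry, though near some $\mu_j$, is different from it by Theorem~\ref{thm:addeig}---shows that, after shrinking all perturbations, the diagonal of $A$ can indeed be kept disjoint from $\{\mu_0,\dots,\mu_\ell\}$. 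Everything else is bookkeeping: matching $|\sigma'|$ with the number of duplicated vertices and extracting the pairing $\hat m_j\ge r_j$ from the ``covers'' hypothesis.
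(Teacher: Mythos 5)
Your proof is correct and follows essentially the same route as the paper's: realize a spectrum with multiplicity list $\underline{r}$ on the minimal block graph $G_0$ via Theorem~\ref{cor:blockgraph}, use Remark~\ref{rem:diags} to keep the diagonal of that matrix away from the eigenvalues to be added, and then pass to $G$ as a blowup of $G_0$ via Lemma~\ref{lem:blowup}. The only difference is that you spell out the bookkeeping (the pairing $\hat m_j\ge r_j$ from the covering hypothesis and the count $|\sigma'|=|G|-|G_0|$) and the diagonal-avoidance argument more explicitly than the paper does.
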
 

\begin{proof}
Let $\sigma=\{\lambda_0^{(\hat m_0)}, \ldots, \lambda_\ell^{(\hat m_\ell)}\}$ be the spectrum that we want to achieve for a matrix $A \in \calS(G)$, and let $\underline{r}$ be a refinement of $\{k,m_1+1-k,m_2,\ldots,m_h\}$ that is covered by $\hat{m}_0, \ldots, \hat{m}_\ell$.  Thus, we may make a partition $\sigma = \sigma_0\cup\sigma'$ such that $\sigma_0$ has multiplicity list $\underline{r}$.

By Theorem~\ref{cor:blockgraph} and Remark~\ref{rem:diags}, there exists a matrix $A_0 \in \calS(G_0)$ whose spectrum is $\sigma_0$ and whose diagonal entries avoid elements in $\sigma'$. Since $G$ is a blowup of $G_0$, Lemma~\ref{lem:blowup} guarantees the existence of a matrix $A\in\calS(G)$ whose spectrum is $\spec(A_0)\cup\sigma' = \sigma$.
\end{proof}

Recall, the clique-path graphs are
$$KP(b_1,\ldots,b_h)=K_{b_1}\oplus_{v_1}K_{b_2}\oplus_{v_2}\cdots \oplus_{v_{h-1}}K_{b_h} \in \mc{BG}(b_1,\ldots,b_h),$$ 
where $b_i \geq 2$, $h\geq 1$, and $\{v_i\}_{i=1}^{h-1}$ are distinct. They are also known as the line graphs of caterpillars. The subproblems of IEP-$G$ of finding $q$ and $\mr$ are already solved for this family. Namely, $\mr(KP(b_1,\ldots,b_h))=h$ \cite[Table~1]{MR2388646}, and $q(KP(b_1,\ldots,b_h))=h+1$ \cite[Theorem~4.3]{MR3904092}. 

Recall that Corollaries~\ref{cor:lollipop} and \ref{cor:barbell} solve IEP-$G$ for special cases of clique-path graphs, where the only cliques of size greater than two are allowed to be at the ends of the clique-path graphs, namely for lollipop and generalized barbell graphs. However, Theorem~\ref{thm:multilist minBG} now resolves IEP-$G$ for clique-path graphs with an additional clique of size greater than two. Example of such graph is shown on~Figure~\ref{fig:G3}. 

\begin{corollary}
Let $G = KP(b_1,\ldots,b_h)$ be a clique-path graph such that $b_i=2$ for all $i$ except possibly for $i=1$, $i=h$ and at most one other index $j$, $2 \leq j \leq h-1$.
If $\sigma$ is a multiset with $1+\sum_{i=1}^h(b_i-1)$ elements, then $\sigma$ is a spectrum of a matrix in $\calS(G)$ if and only if $\sigma$ contains at least $h+1$ distinct elements.
\end{corollary}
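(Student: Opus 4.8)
The plan is to deduce this corollary from Theorem~\ref{thm:multilist minBG} together with the known lower bound $q(KP(b_1,\ldots,b_h)) = h+1$. For the forward (necessity) direction, I would invoke \cite[Theorem~4.3]{MR3904092}: any matrix in $\calS(G)$ has at least $q(G) = h+1$ distinct eigenvalues, so $\sigma$ must contain at least $h+1$ distinct elements. This direction requires essentially no new work.

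For the reverse (sufficiency) direction, the idea is that under the stated hypothesis the graph $G$ is already \emph{minimal}, or at least its structure is simple enough that Theorem~\ref{thm:multilist minBG} directly applies. First I would observe that a clique-path graph $KP(b_1,\ldots,b_h)$ in which every internal clique (except possibly one, the $j$-th) has size $2$ is its own minimal block graph: each size-$2$ block is an edge and contributes no extra non-cut vertices, the two end cliques $K_{b_1}$ and $K_{b_h}$ each contain exactly one non-cut vertex (a leaf end), and the single possibly-large internal clique $K_{b_j}$ has all its vertices as cut vertices. Hence $G_0 = G$ and in the notation of Theorem~\ref{thm:multilist minBG} we have $n_i = m_i = b_i - 1$ for all $i$, with $m_i = 1$ for $i \notin \{1,j,h\}$.

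Next I would check that \emph{every} multiplicity list with at least $h+1$ parts and total $|G| = 1+\sum_{i=1}^h(b_i-1)$ covers a refinement of $\{k, m_1+1-k, m_2,\ldots,m_h\}$ for a suitable choice of $k$ with $1 \le k \le m_1 = b_1-1$. Here the key arithmetic point is that $\{k, m_1+1-k, m_2,\ldots,m_h\}$ has exactly $h+1$ parts summing to $|G|$, and all but at most three of the entries $m_2,\ldots,m_h$ equal $1$; by choosing $k$ and then refining the ``large'' parts into all $1$'s one obtains, among the refinements, the list $\{1,1,\ldots,1\}$ of $|G|$ ones — which is covered by any multiset of positive integers of total $|G|$ with at least $h+1$ parts. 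Actually more care is needed: a refinement must keep $k$ intact, so the attainable refinements are $\{k\}$ together with any refinement of $m_1+1-k$ and of each $m_i$; choosing $k=1$ and refining everything down to ones gives $1+\,$(rest all ones), i.e. $|G|$ ones total, which any list of $\ge h+1 \ge$ (well, $\ge |G|$ is false, so one must note the list has between $h+1$ and $|G|$ parts) — precisely, any multiset of $|G|$ positive integers with $\le |G|$ parts covers the all-ones list of its own size only if it equals it, so instead one refines only as far as needed: given the target list $\hat m_0 \ge \cdots \ge \hat m_\ell$ with $\ell+1 \ge h+1$ parts, pick $k=1$ and produce a refinement of $\{1, m_1, m_2, \ldots, m_h\}$ with exactly $\ell+1$ parts whose sorted sequence is dominated by $(\hat m_0,\ldots,\hat m_\ell)$; this is possible because the coarsest list $\{1,m_1,\ldots,m_h\}$ has $h+1 \le \ell+1$ parts and total $|G|$, and one can always split parts to reach $\ell+1$ parts while the majorization order only decreases, matching any target of the same total with a dominating sorted profile. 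I expect this combinatorial verification — showing the covering condition holds for \emph{all} admissible lists — to be the main obstacle, and I would handle it by a short majorization/splitting argument, possibly citing that the all-ones refinement is covered by everything and that a refinement can realize any intermediate multiplicity list. Once the covering condition is established, Theorem~\ref{thm:multilist minBG} immediately gives that the list is spectrally arbitrary for $G$, hence $\sigma$ is realizable, completing the proof.
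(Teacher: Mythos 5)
Your necessity direction is exactly the paper's (cite $q(KP(b_1,\ldots,b_h))=h+1$), but the sufficiency direction contains a genuine error: $G$ is \emph{not} its own minimal block graph. In $KP(b_1,\ldots,b_h)$ the end block $K_{b_1}$ has only one cut vertex (the one shared with $K_{b_2}$) and hence $b_1-1$ non-cut vertices, and the internal block $K_{b_j}$ has exactly two cut vertices and $b_j-2$ non-cut vertices; so the minimal block graph is $G_0=KP(c_1,\ldots,c_h)$ with $c_j=3$ (when $b_j\geq 3$) and $c_i=2$ otherwise, i.e., $m_j=2$ and $m_i=1$ for $i\neq j$ --- not $m_i=b_i-1$ as you assert. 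This is not a cosmetic slip: the covering condition in Theorem~\ref{thm:multilist minBG} is stated relative to the $m_i$'s of $G_0$, and the entire point of passing to the minimal block graph is that the blowup step (Lemma~\ref{lem:blowup}) absorbs the surplus vertices with completely arbitrary extra eigenvalues. With your choice $m_i=b_i-1$ the condition genuinely fails: take $G=KP(5,5,5)$, so $h=3$, $|G|=13$, and the multiplicity list $\{10,1,1,1\}$, which has $h+1=4$ distinct parts and must be realizable by the corollary; yet every refinement of $\{k,5-k,4,4\}$ has at least two parts equal to $4$ or splits them into more than four parts, so $\{10,1,1,1\}$ covers none of them. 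Your closing majorization/splitting argument does not repair this (and, as written, it is also internally inconsistent --- you notice yourself that the all-ones refinement has too many parts to be covered).

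With the correct $G_0$ the covering verification is a two-line case check rather than a delicate majorization argument: the coverable refinements are $\{2,1,\ldots,1\}$ with $h+1$ elements and $\{1,\ldots,1\}$ with $h+2$ elements; any multiplicity list with at least $h+2$ parts covers the latter, and any list with exactly $h+1$ parts has total $|G|\geq h+2$ (since $b_j\geq 3$), so some part is at least $2$ and it covers the former. (The paper also disposes of the case $b_j=2$ separately via Corollary~\ref{cor:barbell}, since then $G$ is a generalized barbell graph; if you prefer a uniform argument you should at least note that this case works because every list with at least $h+1$ parts covers $\{1^{(h+1)}\}$.)
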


\begin{proof}
Let $G$ and $j$ be as in the statement. If $b_j=2$, then $G$ is a generalized barbell graph, so this case is covered by Corollary~\ref{cor:barbell}.  Therefore, we may assume $b_j\geq 3$.

Recall that $q(G)=h+1$, hence every matrix in $\calS(G)$ has at least $h+1$ distinct element.
To prove the converse, observe that the corresponding minimal block graph of $G=KP(b_1,\ldots,b_h)$ is equal to $G_0=KP(c_1,\ldots,c_h)$, where $c_j=3$ and $c_i=2$ for all $i\in \{1,\ldots,h\}\setminus\{j\}$. Hence, $G_0 \in \mc{BG}(m_1+1,\ldots,m_h+1)$, where $m_j=2$ and $m_i=1$ for $i\in \{1,\ldots,h\}\setminus\{j\}$. By Theorem~\ref{thm:multilist minBG} any multiset of positive integers $\{\hat{m}_0,\ldots,\hat{m}_\ell\}$ with $\sum_{j=0}^\ell \hat{m}_j = |G| \geq h+1$ that covers multiset $\{2,1,\ldots,1\}$ with $h+1$ elements or $\{1,1,\ldots,1\}$ with $h+2$ elements, is a spectrally arbitrary multiplicity list for $G$. Hence, any multiset with at least $h+1$ distinct elements can be realized as a spectrum of a matrix in $\calS(G)$.
\end{proof}

Theorem \ref{thm:multilist minBG} exposes a large family of spectrally arbitrary multiplicities for block graphs, but does not solve the IEP-$G$ for block graphs in general. We illustrate this point with selected examples below.

\begin{example}
Theorem \ref{thm:multilist minBG} will always produce multiplicity lists with at least $h+1$ distinct elements for matrices in clique-path graphs $KP(b_1,\ldots,b_h)$, but in general will not solve the IEP-$G$.

For example, if $G:=KP(2,3,3,\ldots,3,2)$, it is a minimal block graph  with $h$ blocks, $|G|=2h-1$. Theorem \ref{cor:blockgraph} 
gives us spectrally arbitrary multiplicities 
$$\{\underbrace{2,\ldots,2}_{a},\underbrace{1,\ldots,1}_{b}\}, b \geq 3, a+b \geq h+1, 2a+b=2h-1.$$
Hence, we obtain all possible multiplicity lists with at least $h+1$ elements consisting  of only $1$'s and $2$'s. For $h=3$ this is a complete possible list of multiplicities for $KP(2,3,2)$, see \cite[Fig.~1]{MR4074182}. For $h \geq 4$, the maximal multiplicity of $KP(2,3,3,\ldots,3,2)$ with $h$ blocks is $h-1$ .
In particular, if $h=4$, then it remains to be resolved whether $\{3,1,1,1,1\}$ is spectrally arbitrary for the graph $KP(2,3,3,2)$.
\end{example}

\begin{example}
The minimal block graph for a clique-star graph $$G:=KS(m_1,\ldots,m_h) \in \mc{BG}(m_1,\ldots,m_h),$$ $m_i \geq 2$, $h\geq 2$, is equal to $K_{1,h}\in\mc{BG}(2,2,\ldots,2)$.

By Theorem \ref{thm:multilist minBG}, a multiplicity list $\{\hat{m}_0,\hat m_1,\ldots,\hat m_\ell\}$, $\ell \geq h$, is spectrally arbitrary for $G$ if it covers $\{1,1,\ldots,1\}$,  where $1$ is repeated $(h+1)$-times.  Equivalently, all partitions of $|KS(m_1,\ldots,m_h)|$ with at least $h+1$ parts are spectrally arbitrary multiplicity lists  for $KS(m_1,\ldots,m_h)$.

On the other hand, it is known \cite[Theorem~4.3]{MR3904092} that $q(KS(m_1,\ldots,m_h))=3$. It is also worth noting the multiplicity lists with $3$ elements may not be spectrally arbitrary for $KS(m_1,\ldots,m_h)$. For example, the only spectrally arbitrary multiplicity list for the minimal block graph  $K_{1,n}=KS(2,2,\ldots,2)\in \mc{BG}(2,2,\ldots,2)$ found by our method is $\{1,1,\ldots,1\}$. As it happens this is the only spectrally arbitrary multiplicity list in this case, \cite[Remark~4]{MR1902112}.
\end{example}

\begin{example}
The corona of a complete graph $K_n \circ K_1 \in \mc{BG}(n,2,2,\ldots,2)$ is a minimal block graph.
Theorem \ref{cor:blockgraph} tells us that multiplicity lists of the form
$$\{n_1,n_2,\ldots,n_t,\underbrace{1,\ldots,1}_{n}\}, \, t \geq 2, \, \sum_{j=1}^t n_j=n,$$
are spectrally arbitrary in this case. In particular, choosing $t=2$, $n_1=n-1$ and $n_2=1$, we obtain the spectrally arbitrary multiplicity list $\{n-1,1,\ldots,1\}$ for $K_n \circ K_1$, which achieves the maximal multiplicity $M(K_n \circ K_1)=n-1$ \cite[Table~1]{MR2388646}.

For example, for $G_{94}=K_3 \circ K_1$ the multiplicity list $\{2,1,1,1,1\}$ is spectrally arbitrary. This example and its multiplicity lists was considered in \cite[p.~35]{2017arxiv170802438}, where it was proved that ordered multiplicity lists $(1,2,1,2)$ and $(2,1,2,1)$ are realizable, while ordered multiplicity lists $(2,2,1,1)$, $(1,1,2,2)$ and $(1,2,2,1)$ are not, hence proving that our methods gives all possible spectrally arbitrary multiplicity lists for $G_{94}$. 
\end{example}

\subsection*{Acknowledgments}
Jephian C.-H. Lin was supported by the Young Scholar Fellowship Program (grant no.\ MOST-109-2636-M-110-006) from the Ministry of Science and Technology in Taiwan. Polona Oblak acknowledges the financial support from the Slovenian Research Agency  (research core funding No. P1-0222).
 

\end{document}